\let\expandafter\xbf\csname bfseries \endcsname
\let\expandafter\xmd\csname mdseries \endcsname
\let\xbar\bar
\let\bar\xbar
\let\csname bfseries \endcsname\xbf
\let\csname mdseries \endcsname\xmd
\newtheorem{theorem}{Theorem}[section]
\newtheorem{proposition}{Proposition}[section]
\newtheorem{corollary}{Corollary}[section]
\newtheorem*{Rem*}{Remark}
\newcommand{\N}{\mathbb{N}}
\newcommand{\C}{\mathbb{C}}
\newcommand{\Z}{\mathbb{Z}}
\newcommand{\OO}{\mathcal{O}}
\newcommand{\V}{\mathcal{V}}
\newcommand{\edproof}{ $\hfill {\Box}$}
\title
[Variation and oscillation Jacobi]
{Variation and oscillation for semigroups associated with discrete Jacobi operators}
\author[J. J. Betancor]{J. J. Betancor}
\address{Jorge J. Betancor\newline
	Departamento de An\'alisis Matem\'atico, Universidad de La Laguna,\newline
	Campus de Anchieta, Avda. Astrof\'isico S\'anchez, s/n,\newline
	38721 La Laguna (Sta. Cruz de Tenerife), Spain}
\email{jbetanco@ull.es}
\author[M. De Le\'on-Contreras]{M. De Le\'on-Contreras$^*$}
\address{\newline
       Marta De Le\'on-Contreras \newline
       Departamento de An\'alisis Matem\'atico, Universidad de La Laguna,\newline
       Campus de Anchieta, Avda. Astrof\'isico S\'anchez, s/n,\newline
       38721 La Laguna (Sta. Cruz de Tenerife), Spain}
\email{mleoncon@ull.edu.es}
\keywords{}
\subjclass[2020]
{42B25, 42B30.}
\thanks{$^*$Corresponding author}
\begin{document}

\begin{abstract}
In this paper we prove weighted $\ell^p$-inequalities for variation and oscillation operators defined by semigroups of operators associated with discrete Jacobi operators. Also, we establish that certain maximal operators involving sums of differences  of discrete Jacobi  semigroups are bounded on weighted $\ell^p$-spaces. $\ell^p$-boundedness properties for the considered  operators  provide information about the convergence of the semigroup of operators defining them.
\end{abstract}
\maketitle

\setcounter{secnumdepth}{3}
\setcounter{tocdepth}{3}


\section{Introduction}

The $\rho$-variational inequalities for bounded martingales were {first} studied by L\'epingle in \cite{Leb}. These properties can be seen as extensions of Doob's maximal inequality and they give quantitative versions of the martingale convergence theorem. Generalizations of L\'epingle's results can be found in \cite{Bou, MSZ} and \cite{PX}.

Bourgain (\cite{Bou}) was the first in studying variational inequalities in ergodic theory. He rediscovered L\'epingle's inequality and used it to establish pointwise convergence of ergodic averages involving polynomial orbits. The seminal paper \cite{Bou} opened the study of variational inequalities in harmonic analysis and ergodic theory (\cite{CJRW1, CJRW2, JKRW, JRe, JSW, JW,MSS, MST} and \cite{MSZ}. Oscillation and variation estimates for semigroups of operators can be found,  for instance, in \cite{BCT,HMMT,JW,RZ} and \cite{TZ2}.

Let $\rho>0$ and $\{a_t\}_{t>0}\subset \C$. We define the $\rho$-variation of $\{a_t\}_{t>0}$, $\V_\rho(\{a_t\}_{t>0})$, by 
$$
\V_\rho(\{a_t\}_{t>0})=\sup_{\substack{0<t_n<t_{n-1}<\dots<t_1\\n\in\N}}\left(\sum_{{j=1}}^{n-1}|a_{t_j}-a_{t_{j+1}}|^\rho \right)^{1/\rho}.
$$

Let $\{t_j\}_{j\in\N}\subset (0,\infty)$ be a decreasing sequence such that $t_j\to 0$, as $j\to \infty$. The oscillation of $\{a_t\}_{t>0}$,  $\OO(\{a_t\}_{t>0},\{t_j\}_{j\in\N})$, is defined by
$$
\OO(\{a_t\}_{t>0},\{t_j\}_{j\in\N})=\left(\sum_{{j=1}}^{\infty}\sup_{t_{j+1}\le \epsilon_{j+1}<\epsilon_j\le t_{j}}|a_{\epsilon_j}-a_{\epsilon_{j+1}}|^2 \right)^{1/2}.
$$
Let $\lambda>0.$  We define the $\lambda$-jump of  $\{a_t\}_{t>0}$, $\Lambda(\{a_t\}_{t>0},\lambda)$ by
\begin{align*}
    \Lambda(\{a_t\}_{t>0},\lambda)=\sup\{n\in\N: \:\exists \: s_1<t_1\le s_2<t_2\le\dots\le s_n<t_n, \text{ such that }\\ |a_{t_i}-a_{s_i}|>\lambda, \: i=1,\dots,n\}.
\end{align*}
Variations, oscillation and jumps provide us information about convergence properties for $\{a_t\}_{t>0}.$

 Suppose that $\{T_t\}_{t>0}$ is a family of operators in $L^p(X,\mu)$ with $1\le p<\infty$, where $(X,\mu)$ is a measure space. We define, for every $f\in L^p(X,\mu)$, 
 \begin{align*}
 &\V_\rho(\{T_t\}_{t>0})(f)(x):=\V_\rho(\{T_t(f)(x)\}_{t>0}),\\
 &\OO(\{T_t\}_{t>0},\{t_j\}_{j\in\N})(f)(x):=\OO(\{T_t(f)(x)\}_{t>0},\{t_j\}_{j\in\N})\\
 \text{and}\\
 & \Lambda(\{T_t\}_{t>0},\lambda)(f)(x):= \Lambda(\{T_t(f)(x)\}_{t>0},\lambda).
 \end{align*}
 An important issue in this point is the measurability of these new functions. Comments about this property can be encountered after \cite[Theorem 1.2]{CJRW1}. Our objective is to get $L^p$-boundedness properties for the variations, oscillation and jump operators. As usual, in order to obtain $L^p$-boundedness for the $\rho$-variation operator, we need to consider $\rho>2$. This is the case when we work with martingales, see \cite{JW} and \cite{Q}. The oscillation operator, which has exponent 2, can be a good substitute of the $2-$variation operator. According to \cite[(1.15)]{MSS}, we can see uniform $\lambda$-jump estimates as endpoint estimates for $\rho$-variations, $\rho>2$. Moreover,  it is proved in \cite[Theorem 1.9]{MSS} that {the oscillation operator} cannot be interpreted as an endpoint in the sense of inequality \cite[(1.15)]{MSS} for $\rho$-variations, $\rho>2$.
 
 Let $\{a_j\}_{j\in\Z}$ be an increasing sequence in $(0,\infty)$ and $\{b_j\}_{j\in\Z}$  a bounded real sequence. According to {\cite{BLMMTT}} and \cite{JRo}, we define, for every $N=(N_1,N_2)$ with $N_1,N_2\in\Z$, $N_1<N_2$, the operator $S_N$ by
 $$
S_{\{a_j\}_{j\in{\Z}},N}^{\{b_j\}_{j\in{\Z}}}(\{T_t\}_{t>0})(f)=\sum_{j=N_1}^{N_2}b_j(T_{a_{j+1}}f-T_{a_{j}}f),
 $$
 and the corresponding maximal operator, $S_*$, by
 $$
S_{\{a_j\}_{j\in{\Z},*}}^{\{b_j\}_{j\in{\Z}}}(\{T_t\}_{t>0})(f)=\sup_{\substack{N=(N_1,N_2)\\
 	N_1,N_2\in\Z,\: N_1<N_2}}\left|S_{\{a_j\}_{j\in{\Z},N}}^{\{b_j\}_{j\in{\Z}}}(\{T_t\}_{t>0})(f)\right|.
 $$

These operators can help us to complete the picture of the convergence properties of $\{T_t\}_{t>0}$. By \cite[Remark 1]{JRo}, we need to assume that  the sequence  $\{a_j\}_{j\in\Z}$ satisfies some extra condition (lacunarity, for instance)  in order to obtain $L^p$-boundedness properties for the operator $S_*$. 

Our objective is to establish $L^p$-inequalities for all above operators when $\{T_t\}_{t>0}$ is the discrete Jacobi heat semigroup.

We now recall some definitions and properties about Jacobi polynomials that we will use along the paper.

Let $\alpha,\beta>-1$. For every $n\in{\N_0:=\N\cup\{0\}}$, we define the $n-$th Jacobi polynomial $P_n^{(\alpha,\beta)}$ by
	$$
	P_n^{(\alpha,\beta)}(x)=\frac{(-1)^n}{2^n n!}(1-x)^{-\alpha}(1+x)^{-\beta} \frac{d^n}{dx^n}((1-x)^{\alpha+n}(1+x)^{\beta+n}), \quad x\in (-1,1),
	$$
	see \cite[p.67, formula (4.3.1)]{Sz}.
	
	We also consider $p_n^{(\alpha,\beta)}=w_n^{(\alpha,\beta)}P_n^{(\alpha,\beta)}$, $n\in {\N_0}$, where 
	
	$$
	w_n^{(\alpha,\beta)}=\sqrt{\frac{(2n+\alpha+\beta+1)\Gamma(n+1)\Gamma(n+\alpha+\beta+1)}{2^{\alpha+\beta+1}\Gamma(n+\alpha+1)\Gamma(n+\beta+1)}}, \quad n\in\N,
	$$
	and
	$$
		w_0^{(\alpha,\beta)}=\sqrt{\frac{\Gamma(\alpha+\beta+2)}{2^{\alpha+\beta+1}\Gamma(\alpha+1)\Gamma(\beta+1)}}.
	$$
	The sequence $\{p_n^{(\alpha,\beta)}\}_{n\in{\N_0}}$ is an orthonormal basis in $L^2((-1,1),\mu_{\alpha,\beta})$, where $d\mu_{\alpha,\beta}(x)=(1-x)^{\alpha}(1+x)^{\beta}dx.$

	We define the difference operator $J^{(\alpha,\beta)}$ as follows,
	
	$$
		J^{(\alpha,\beta)}(f)(n)=a_{n-1}^{(\alpha,\beta)}f(n-1)+b_n^{(\alpha,\beta)}f(n)+a_n^{(\alpha,\beta)}f(n+1),\quad n\in\N,\,
	$$
	and
	$$
	J^{(\alpha,\beta)}(f)(0)=b_0^{(\alpha,\beta)}f(0)+a_0^{(\alpha,\beta)}f(1),
	$$
	where 
	\begin{align*}
	a_n^{(\alpha,\beta)}&=\frac{2}{2n+\alpha+\beta+2}\sqrt{\frac{(n+1)(n+\alpha+1)(n+\beta+1)(n+\alpha+\beta+1)}{(2n+\alpha+\beta+1)(2n+\alpha+\beta+3)}}, \quad n\in\N,\\\
	a_0^{(\alpha,\beta)}&= \frac{2}{\alpha+\beta+2}\sqrt{\frac{(\alpha+1)(\beta+1)}{\alpha+\beta+3}},\\
	b_n^{(\alpha,\beta)}&=\frac{\beta^2-\alpha^2}{(2n+\alpha+\beta)(2n+\alpha+\beta+2)}-1, \quad n\in\N,\ \\
	\text{and}\\
	b_0^{(\alpha,\beta)}&={-\frac{2\alpha+2}{\alpha+\beta+2}.}
		\end{align*}
	The spectrum of the operator $	J^{(\alpha,\beta)}$ is $[-2,0]$ and, for every $x\in[-1,1]$,
	$$
		J^{(\alpha,\beta)}p_n(x)=(x-1)p_n(x), \quad n\in{\N_0}.
	$$

The operator $	J^{(\alpha,\beta)}$ is bounded from ${\ell^p(\N_0)}$ into itself, for every $1\le p\le \infty$. Furthermore, the operator  $	J^{(\alpha,\beta)}$  is selfadjoint on  ${\ell^2(\N_0)}$ and $-	J^{(\alpha,\beta)}$ is a positive operator in
${\ell^2(\N_0)}$. We denote by $\{W_t^{(\alpha,\beta)}\}_{t>0}:=\{e^{tJ^{(\alpha,\beta)}}\}_{t>0}$ the semigroup of operators generated by $J^{(\alpha,\beta)}$.

We define the $(\alpha,\beta)$-Fourier transform as follows
$$
\mathcal{F}^{(\alpha,\beta)}(f)=\sum_{n=0}^\infty f(n)p_n^{(\alpha,\beta)}, \quad f\in{\ell^2(\N_0)}.
$$
Thus, $\mathcal{F}^{(\alpha,\beta)}$ is an isometry from ${\ell^2(\N_0)}$ into $L^2((-1,1),\mu_{\alpha,\beta})$.

We can write, for every $t>0$,
$$
W_t^{(\alpha,\beta)}(f)(n)=\int_{-1}^1e^{-t(1-x)}\mathcal{F}^{(\alpha,\beta)}(f)(x)p_n^{(\alpha,\beta)}(x) d\mu_{\alpha,\beta}(x), \quad n\in{\N_0}.
$$
We can see that, for every $t>0$,
$$
W_t^{(\alpha,\beta)}(f)(n)=\sum_{m=0}^\infty f(m) K_t^{(\alpha,\beta)}(n,m),\quad n\in{\N_0},
$$
where
\begin{equation}\label{eq2}
K_t^{(\alpha,\beta)}(n,m)=\int_{-1}^1 e^{-t(1-x)}p_n^{(\alpha,\beta)}(x)p_m^{(\alpha,\beta)}(x)d\mu_{\alpha,\beta}(x), \quad n,m\in{\N_0}.
\end{equation}
Gasper (\cite{Ga3,Ga1} and \cite{Ga2}) established the linearisation property for the product of Jacobi polynomials and his results can be transfered to the polynomials $\{p_n^{(\alpha,\beta)}\}_{n\in{\N_0}}$. Then, a convolution operator can be defined in the $\{p_n^{(\alpha,\beta)}\}_{n\in{\N_0}}$ that is transformed by $\mathcal{F}^{(\alpha,\beta)}$  in the pointwise product. For every $t>0$, $W_t^{(\alpha,\beta)}$ can be seen as a convolution operator.

Askey (\cite{A}) proved a power weighted transplantation theorem for Jacobi coefficients. Recently, Arenas, Ciaurri and Labarga (\cite{ACL4}) extended Askey's result by considering the transplantation operator as a singular integral and weights in the Muckenhoupt class for $({\N_0},\mathcal{P}({\N_0}),\mu_d)$. By taking as inspiration point the study of Ciaurri, Gillespie, Roncal, Torrea and Varona (\cite{CGRTV}) about harmonic analysis operators associated with the discrete Laplacian, Betancor, Castro, Fari\~na and Rodr\'iguez-Mesa (\cite{BCFR}) established weighted $L^p$-inequalities for harmonic analysis operators in the discrete ultraspherical setting. They took advantage of the discrete convolution operator associated with the ultraspherical polynomials in the discrete context (\cite{Hi}). Jacobi polynomials reduce to ultraspherical polynomials when $\alpha=\beta$. Arenas, Ciaurri and Labarga (\cite{ACL3,ACL1, ACL2}) extended the results in \cite{BCFR} to the Jacobi context. They needed to use a different procedure from the one employed in \cite{BCFR} for the ultraspherical setting because they can not use the convolution operator. Also, as in \cite{BCFR}  and \cite{CGRTV}, scalar and vector-valued Calder\'on-Zygmund theory for singular integrals  was a main tool. Maximal operators and Littlewood-Paley functions defined for the heat semigroup $\{ W_t^{(\alpha,\beta)}\}_{t>0}$ were studied in \cite{ACL1} and \cite{ACL3}, respectively.

Riesz transforms associated with the discrete Jacobi operator $J^{(\alpha,\beta)}$ were considered in \cite{ACL2}.

We now state our results. A real sequence $\{v_n\}_{n\in{\N_0}}$ is said to be a weight when $v_n>0$, $n\in{\N_0}$. If $1<p<\infty$, we say that a weight $\{v_n\}_{n\in{\N_0}}$ is in $A_p({\N_0})$ when
$$
\sup_{\substack{0\le n\le m\\n,m\in{\N_0}}}\frac{1}{(m-n+1)^p}\sum_{k=n}^mv_k\left(\sum_{k=n}^mv_k^{\frac{-1}{p-1}} \right)^{{p-1}}<\infty.
$$
A weight $\{v_n\}_{n\in{\N_0}}$ belongs to the class $A_1({\N_0})$ when
$$
\sup_{\substack{0\le n\le m\\n,m\in{\N_0}}}\frac{1}{m-n+1}\left(\sum_{k=n}^mv_k \right)\max_{n\le k\le m}\frac{1}{v_k}<\infty.
$$

For every weight $v$ on ${\N_0}$ and $1\le p<\infty$, we denote by $\ell^p({\N_0},w)$ the weighted $p$-Lebesgue space on $({\N_0},\mathcal{P}({\N_0}),\mu_d)$ and by $\ell^{1,\infty}({\N_0},w)$ the $(1,\infty)$-weighted Lorentz space on $({\N_0},\mathcal{P}({\N_0}),\mu_d)$.

\begin{theorem}\label{teo1.1}
Let $\alpha,\beta\ge -\frac{1}{2}$, $\rho>2$ and  $\{t_j\}_{j\in\N}$ be a decreasing sequence in $(0,\infty)$ {that converges to $0$}.
\begin{itemize}
\item[$a)$] The variation operator $\V_\rho(\{ W_t^{(\alpha,\beta)}\}_{t>0})$ and the oscillation operator \newline $\OO(\{W_t^{(\alpha,\beta)}\}_{t>0},\{t_j\}_{j\in\N})$ are bounded from  $\ell^p({\N_0},v)$ into itself, for every $1<p<\infty$ and $v\in A_p({\N_0})$, and from $\ell^1({\N_0},v)$ into $\ell^{1,\infty}({\N_0},v)$, for every $v\in A_1({\N_0})$.

\item[$b)$] The family $\{{\lambda}(\Lambda(\{W_t^{(\alpha,\beta)}\}_{t>0},\lambda))^{1/\rho}\}_{\lambda>0}$, is uniformly bounded from $\ell^p({\N_0},v)$ into itself, for every $1<p<\infty$ and $v\in A_p({\N_0})$, and from $\ell^1({\N_0},v)$ into $\ell^{1,\infty}({\N_0},v)$, for every $v\in A_1({\N_0})$.

\end{itemize}

\end{theorem}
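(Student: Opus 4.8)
The plan is to view both the variation operator $\V_\rho(\{W_t^{(\alpha,\beta)}\}_{t>0})$ and the oscillation operator $\OO(\{W_t^{(\alpha,\beta)}\}_{t>0},\{t_j\}_{j\in\N})$ as vector-valued Calder\'on--Zygmund operators on the space of homogeneous type $(\N_0,|\cdot|,\mu_d)$, for which the Muckenhoupt classes are exactly the weight classes $A_p(\N_0)$ and $A_1(\N_0)$ defined above. Let $E$ denote the $\rho$-variation Banach space (respectively the oscillation space attached to $\{t_j\}_{j\in\N}$), so that $\V_\rho(\{W_t^{(\alpha,\beta)}\}_{t>0})(f)(n)=\|\{W_t^{(\alpha,\beta)}(f)(n)\}_{t>0}\|_E$ and similarly for $\OO$. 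Since $W_t^{(\alpha,\beta)}(f)(n)=\sum_{m=0}^\infty f(m)K_t^{(\alpha,\beta)}(n,m)$, the map $f\mapsto\{W_t^{(\alpha,\beta)}(f)(n)\}_{t>0}$ is an $E$-valued linear operator whose kernel is $\mathbb{K}(n,m)=\{K_t^{(\alpha,\beta)}(n,m)\}_{t>0}$, and both problems split into two ingredients: (i) unweighted $\ell^2(\N_0)$-boundedness, and (ii) Calder\'on--Zygmund size and smoothness estimates for $\mathbb{K}$. Granting these, the weighted vector-valued Calder\'on--Zygmund theory on spaces of homogeneous type (the tool used in \cite{BCFR,CGRTV} and in \cite{ACL1,ACL3}) delivers the boundedness on $\ell^p(\N_0,v)$ for $v\in A_p(\N_0)$, $1<p<\infty$, and the weak-type $(1,1)$ bound from $\ell^1(\N_0,v)$ to $\ell^{1,\infty}(\N_0,v)$ for $v\in A_1(\N_0)$.

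For ingredient (i) I would use that $A:=-J^{(\alpha,\beta)}$ is a nonnegative self-adjoint operator on $\ell^2(\N_0)$, so that $W_t^{(\alpha,\beta)}=e^{-tA}$. The spectral theorem then gives the $g$-function estimate $\int_0^\infty\|tAe^{-tA}f\|_{\ell^2}^2\,\frac{dt}{t}\le C\|f\|_{\ell^2}^2$. For $\rho>2$ the $\rho$-variation of $\{e^{-tA}f\}_{t>0}$ is dominated by a lacunary square function plus this $g$-function, both controlled on $\ell^2(\N_0)$ by the spectral calculus, which yields the $\ell^2$-boundedness of $\V_\rho$; the $\ell^2$-boundedness of the oscillation operator is the analogous inequality, classical for semigroups generated by a nonnegative self-adjoint operator.

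The heart of the matter, and the main obstacle, is ingredient (ii). Because $E$ embeds continuously into the total variation space for every $\rho\ge1$ (and the oscillation seminorm is itself bounded by the total variation), the size estimate $\|\mathbb{K}(n,m)\|_E\lesssim\frac{1}{|n-m|}$ and the smoothness estimates $\|\mathbb{K}(n,m)-\mathbb{K}(n',m)\|_E+\|\mathbb{K}(m,n)-\mathbb{K}(m,n')\|_E\lesssim\frac{|n-n'|}{|n-m|^2}$, for $0<2|n-n'|\le|n-m|$, all reduce to controlling $\int_0^\infty|\partial_t K_t^{(\alpha,\beta)}(n,m)|\,dt$ and its first differences in the spatial variables. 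Differentiating \eqref{eq2} gives $\partial_t K_t^{(\alpha,\beta)}(n,m)=-\int_{-1}^1(1-x)e^{-t(1-x)}p_n^{(\alpha,\beta)}(x)p_m^{(\alpha,\beta)}(x)\,d\mu_{\alpha,\beta}(x)$, so what is needed are pointwise bounds on the Jacobi heat kernel and its $t$-derivative with the correct decay in $|n-m|$. I expect this to be carried out by a local/global splitting: far from the diagonal the rapid decay of $K_t^{(\alpha,\beta)}(n,m)$ produces an absolutely summable, harmless term, while near the diagonal the kernel is compared with a model heat kernel whose estimates are already available, in the spirit of the analysis of the maximal operator and the Littlewood--Paley functions for this semigroup in \cite{ACL1,ACL3}. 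These Jacobi-polynomial estimates, where the hypothesis $\alpha,\beta\ge-\tfrac12$ is used, are the delicate point of the proof.

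Finally, part $b)$ will follow at once from part $a)$ through the elementary pointwise and $\lambda$-uniform domination $\lambda\,(\Lambda(\{a_t\}_{t>0},\lambda))^{1/\rho}\le\V_\rho(\{a_t\}_{t>0})$: indeed, if $\Lambda(\{a_t\}_{t>0},\lambda)=N$ with associated points $s_1<t_1\le\cdots\le s_N<t_N$, then $\sum_{i=1}^N|a_{t_i}-a_{s_i}|^\rho>N\lambda^\rho$ is one of the sums defining $\V_\rho(\{a_t\}_{t>0})^\rho$. Taking $a_t=W_t^{(\alpha,\beta)}(f)(n)$, the $\ell^p(\N_0,v)$ and weak-type bounds for $\V_\rho(\{W_t^{(\alpha,\beta)}\}_{t>0})$ pass, uniformly in $\lambda>0$, to the family $\{\lambda(\Lambda(\{W_t^{(\alpha,\beta)}\}_{t>0},\lambda))^{1/\rho}\}_{\lambda>0}$.
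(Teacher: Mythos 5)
Your overall architecture is the paper's: unweighted $\ell^2$-boundedness plus Banach-valued Calder\'on--Zygmund theory for the kernel $\{K_t^{(\alpha,\beta)}(n,m)\}_{t>0}$, with both the $\rho$-variation and the oscillation norms of the kernel reduced, via the fundamental theorem of calculus, to $\int_0^\infty|\partial_t K_t^{(\alpha,\beta)}(n,m)|\,dt$ and its spatial differences; and part $b)$ deduced from part $a)$ by the pointwise domination of $\lambda$-jumps by the $\rho$-variation (your direct argument giving $\lambda\,\Lambda(\{a_t\},\lambda)^{1/\rho}\le \V_\rho(\{a_t\}_{t>0})$ is correct and matches the paper's citation of \cite{JSW}). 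However, your ingredient (i) contains a genuine gap. The claim that for $\rho>2$ the $\rho$-variation of $\{e^{-tA}f\}_{t>0}$ is pointwise dominated by ``a lacunary square function plus the $g$-function'' is false. The short variation inside dyadic blocks is indeed controlled by the $g$-function via Cauchy--Schwarz, but the long variation along $\{2^j\}_{j\in\Z}$ is \emph{not} dominated by the square function of consecutive differences $\bigl(\sum_j|a_{2^{j+1}}-a_{2^j}|^2\bigr)^{1/2}$: take a monotone sequence with $N$ consecutive increments of size $N^{-1/2}$; its square function equals $1$, while the single difference between the endpoints is $N^{1/2}$, so its $\rho$-variation is at least $N^{1/2}$ for every $\rho$. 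This long-variation piece is exactly where a L\'epingle-type martingale input is unavoidable, and the spectral theorem for a nonnegative self-adjoint $A$ on $\ell^2$ does not supply it. The paper resolves this by a conjugation trick you do not have: since $W_t^{(\alpha,\beta)}$ is not Markovian, one conjugates by the weight $p_n^{(\alpha,\beta)}(1)$, using $J^{(\alpha,\beta)}p_\cdot^{(\alpha,\beta)}(1)=0$ together with Gasper's positivity $K_t^{(\alpha,\beta)}(n,m)\ge 0$ for $\alpha,\beta\ge-\tfrac12$ (\cite{Ga2}), producing a symmetric diffusion semigroup $\{\tilde W_t^{(\alpha,\beta)}\}_{t>0}$ in Stein's sense on $\ell^p(\N_0,v^{(\alpha,\beta)})$; then \cite[Corollary 4.5]{LX} (or \cite[Theorem 3.3]{JRe}), whose proofs pass through Rota's dilation theorem and L\'epingle's inequality, give the variation (and oscillation) bounds for $\tilde W_t^{(\alpha,\beta)}$, which transfer back to $W_t^{(\alpha,\beta)}$ on $\ell^2(\N_0)$. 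Your one-line dismissal of the oscillation operator's $\ell^2$ bound as ``classical for nonnegative self-adjoint generators'' has the same defect: the results actually invoked are for diffusion semigroups, not for bare self-adjoint contraction semigroups, so the Markovianization is needed there too.

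A secondary remark: deferring the kernel estimates to ``the spirit of \cite{ACL1,ACL3}'' leaves out the bulk of the technical work. The paper cannot simply quote those references: near the diagonal the crude bound \eqref{eq2.4} is insufficient, and one needs Szeg\H{o}'s asymptotics \eqref{eq2.6}, the Bessel expansion \eqref{eq2.8}, and the trigonometric cancellation identity \eqref{eq2.9} followed by integration by parts in $\theta$ to extract the decay $|n-m|^{-1}$ (and, after applying \cite[Lemma 5.1]{ACL1} twice, $|n-m|^{-2}$ for the smoothness estimate). Still, the target size and smoothness estimates you state are the correct ones, and once they are established your Calder\'on--Zygmund conclusion and part $b)$ go through as in the paper.
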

Results in Theorem \ref{teo1.1} had not been established for the semigroups generated by the discrete Laplacian and the ultraspherical operators. Now the results in the ultraspherical setting can be deduced from Theorem \ref{teo1.1}  when $\alpha=\beta$. Moreover, it will be explained in Section \ref{sec2} that our procedure in the proof of Theorem \ref{teo1.1}  allows us to prove the corresponding results for the semigroup generated by the discrete Laplacian.

Calder\'on-Zygmund theory for vector-valued singular integrals (\cite{RRT1} and \cite{RRT2}) will be a main tool in our proof of Theorem \ref{teo1.1}. We can not use the transplantation theorem as in \cite{ACL3} because, in contrast with the Littlewood-Paley functions, variation and oscillation operators are not related with Hilbert norms. We need to refine the arguments developed in \cite{ACL1} by using asymptotics for Jacobi polynomials and Bessel functions.

\begin{theorem}\label{teo1.2}
	Let $\alpha,\beta\ge -\frac{1}{2}$. Assume that  $\{a_j\}_{j\in\Z}$ is a $\rho$-lacunary sequence in $(0,\infty)$ with $\rho>1$ and  $\{b_j\}_{j\in\Z}$ is a bounded sequence of real numbers. The maximal operator $S_{\{a_j\}_{j\in{\Z}},*}^{\{b_j\}_{j\in{\Z}}}(\{W_t^{(\alpha,\beta)}\}_{t>0})$ is bounded from $\ell^p({\N_0},w)$ into itself, for every $1<p<\infty$ and $w\in A_p({\N_0})$, and from $\ell^1({\N_0},w)$ into $\ell^{1,\infty}({\N_0},w)$, for every $w\in A_1({\N_0})$.
	\end{theorem}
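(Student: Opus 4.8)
The plan is to realize the maximal operator $S_{\{a_j\}_{j\in\Z},*}^{\{b_j\}_{j\in\Z}}(\{W_t^{(\alpha,\beta)}\}_{t>0})$ as the $E$-norm of a single vector-valued singular integral operator and then invoke the Calder\'on--Zygmund theory for such operators on the space of homogeneous type $(\N_0,|\cdot|,\mu_d)$, exactly as announced for Theorem~\ref{teo1.1}. After normalizing $\|\{b_j\}\|_\infty=1$, I would set $E=\ell^\infty(\mathcal{N})$, where $\mathcal{N}=\{N=(N_1,N_2):N_1,N_2\in\Z,\ N_1<N_2\}$, and define $\mathcal{S}f(n)=\{S_{\{a_j\},N}^{\{b_j\}}(\{W_t^{(\alpha,\beta)}\})(f)(n)\}_{N\in\mathcal{N}}\in E$, so that $S_*f(n)=\|\mathcal{S}f(n)\|_E$. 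The associated $E$-valued kernel is $\mathbb{K}(n,m)=\{\sum_{j=N_1}^{N_2}b_j(K_{a_{j+1}}^{(\alpha,\beta)}(n,m)-K_{a_j}^{(\alpha,\beta)}(n,m))\}_{N\in\mathcal{N}}$, and the whole matter reduces to the two hypotheses of the vector-valued theory: $\ell^2$-boundedness of $\mathcal{S}$ and a H\"ormander-type smoothness estimate for $\mathbb{K}$.

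For the $\ell^2$ estimate I would work on the Fourier side. Since $\mathcal{F}^{(\alpha,\beta)}$ is an isometry and $W_t^{(\alpha,\beta)}$ is the multiplier $e^{-t(1-x)}$, each $S_N$ is the multiplier $m_N(x)=\sum_{j=N_1}^{N_2}b_j(e^{-a_{j+1}(1-x)}-e^{-a_j(1-x)})$; because $t\mapsto e^{-t(1-x)}$ is monotone and $|b_j|\le1$, one gets $|m_N(x)|\le\sum_{j\in\Z}|e^{-a_{j+1}(1-x)}-e^{-a_j(1-x)}|\le 1$ uniformly in $N$ and $x\in[-1,1]$, which telescopes because $\{e^{-a_j(1-x)}\}_j$ is monotone. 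Hence the full sum $R=\sum_{j\in\Z}b_j(W_{a_{j+1}}^{(\alpha,\beta)}-W_{a_j}^{(\alpha,\beta)})$ is bounded on $\ell^2(\N_0)$ and the truncations $S_N$ are uniformly bounded. To pass to the supremum over $\mathcal{N}$ I would exploit the $\rho$-lacunarity of $\{a_j\}_{j\in\Z}$: it forces the partial sums $S_N$ to behave like the natural truncations of the singular integral $R$, so that $S_*$ is controlled by a Cotlar-type maximal singular integral estimate together with the semigroup maximal function and the Littlewood--Paley square function $(\sum_j|W_{a_{j+1}}^{(\alpha,\beta)}f-W_{a_j}^{(\alpha,\beta)}f|^2)^{1/2}$ of $\{W_t^{(\alpha,\beta)}\}_{t>0}$. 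This is exactly the step where lacunarity is indispensable, in accordance with \cite[Remark 1]{JRo}.

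The heart of the argument is the kernel estimate, and here the supremum over $\mathcal{N}$ costs essentially nothing: since the intervals $[a_j,a_{j+1}]$, $j\in\Z$, partition $(0,\infty)$ and $|b_j|\le1$,
\[
\|\mathbb{K}(n,m)\|_E\le\int_0^\infty|\partial_t K_t^{(\alpha,\beta)}(n,m)|\,dt,\qquad \|\mathbb{K}(n,m)-\mathbb{K}(n,m_0)\|_E\le\int_0^\infty\bigl|\partial_t K_t^{(\alpha,\beta)}(n,m)-\partial_t K_t^{(\alpha,\beta)}(n,m_0)\bigr|\,dt,
\]
so everything reduces to the scalar quantities on the right, uniformly in $\{a_j\}$ and $\{b_j\}$. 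What remains is to prove the Calder\'on--Zygmund bounds
\[
\int_0^\infty|\partial_t K_t^{(\alpha,\beta)}(n,m)|\,dt\lesssim\frac{1}{|n-m|}\ \ (n\ne m),\qquad \sum_{\substack{n\in\N_0\\ |n-m|\ge 2|m-m_0|}}\int_0^\infty\bigl|\partial_t K_t^{(\alpha,\beta)}(n,m)-\partial_t K_t^{(\alpha,\beta)}(n,m_0)\bigr|\,dt\lesssim 1,
\]
the symmetric condition in the first variable being automatic from $K_t^{(\alpha,\beta)}(n,m)=K_t^{(\alpha,\beta)}(m,n)$. Writing $\partial_t K_t^{(\alpha,\beta)}(n,m)=\int_{-1}^1(x-1)e^{-t(1-x)}p_n^{(\alpha,\beta)}(x)p_m^{(\alpha,\beta)}(x)\,d\mu_{\alpha,\beta}(x)$, these are precisely the kernel estimates that drive the proof of Theorem~\ref{teo1.1}, and I would prove them by the same mechanism: split the $x$-integral near the endpoints $\pm1$ and in the bulk, insert Hilb-type asymptotics expressing $p_n^{(\alpha,\beta)}$ through Bessel functions near $x=1$ and the oscillatory (cosine) asymptotics away from the endpoints, and then perform the $t$-integration and the summation in $n$ region by region.

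I expect this last step to be the main obstacle. The delicate points are extracting the gain $|m-m_0|/|n-m|$ in the difference $\partial_t K_t^{(\alpha,\beta)}(n,m)-\partial_t K_t^{(\alpha,\beta)}(n,m_0)$ uniformly in $t$, and controlling the transition zone where the Bessel and cosine regimes meet; since the absolute value inside the $t$-integral precludes any telescoping, the decay has to come directly from the asymptotics rather than from cancellation in $t$. Once these scalar estimates are secured, the vector-valued Calder\'on--Zygmund theory of \cite{RRT1,RRT2} applies verbatim and gives boundedness of $\mathcal{S}$, hence of $S_*=\|\mathcal{S}(\cdot)\|_E$, on $\ell^p(\N_0,w)$ for every $1<p<\infty$ and $w\in A_p(\N_0)$, and from $\ell^1(\N_0,w)$ into $\ell^{1,\infty}(\N_0,w)$ for $w\in A_1(\N_0)$, which is Theorem~\ref{teo1.2}.
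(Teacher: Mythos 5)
Your overall architecture coincides with the paper's: realize $S_*$ as the $\ell^\infty(\mathcal{N})$-norm of a vector-valued singular integral, dominate the kernel norms by $\int_0^\infty|\partial_t K_t^{(\alpha,\beta)}(n,m)|\,dt$ (this is exactly how the paper obtains \eqref{eq3.A} and \eqref{eq3.B} from \eqref{eq2.3}), prove uniform $\ell^2$-bounds for the $S_N$ by a multiplier/telescoping argument, and use a Cotlar-type estimate where lacunarity enters. But two genuine gaps remain. First, the smoothness hypothesis you propose to verify, namely the full H\"ormander condition $\sum_{|n-m|\ge 2|m-m_0|}\int_0^\infty|\partial_t K_t^{(\alpha,\beta)}(n,m)-\partial_t K_t^{(\alpha,\beta)}(n,m_0)|\,dt\lesssim 1$, is not available in this setting: the Jacobi asymptotics yield the gradient bound $|n-l|/|n-m|^2$ only in the band $\frac{m}{2}\le n,l\le\frac{3m}{2}$ (this is \eqref{eq2.2}, respectively \eqref{eq3.B}), and outside that band the size bound $1/|n-m|$ alone does not sum, so your verification would fail there. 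The paper circumvents this by splitting every operator into a local part supported in the band, to which the band-adapted Calder\'on--Zygmund theorem \cite[Theorem 2.1]{BCFR} applies, and a global part dominated pointwise by the discrete Hardy operators $\frac1n\sum_{m<n}|f(m)|+\sum_{m>n}|f(m)|/m$; this local/global decomposition is entirely absent from your plan and is needed both for the Corollary to Theorem \ref{teo3.1} and for the weighted bounds on $S_*$.

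Second, the a priori $\ell^2$-bound for the maximal operator itself --- without which the vector-valued theory cannot be invoked at all --- is only gestured at. Your telescoping bound gives uniform boundedness of the individual $S_N$, which is far from a bound on $\sup_N|S_Nf|$, and the Littlewood--Paley square function $(\sum_j|W_{a_{j+1}}^{(\alpha,\beta)}f-W_{a_j}^{(\alpha,\beta)}f|^2)^{1/2}$ cannot dominate $S_*$, since with arbitrary bounded signs $b_j$ the only pointwise domination available is by the $1$-variation $\sum_j|W_{a_{j+1}}^{(\alpha,\beta)}f-W_{a_j}^{(\alpha,\beta)}f|$; so that ingredient of your sketch is a dead end. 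The paper's actual mechanism is the Cotlar inequality of Proposition \ref{prop3.3} (adapted from \cite[Theorem 3.11]{TZ2}), applied to the local part after reducing to $(\lambda,\lambda^2)$-lacunary sequences and writing $S_{(N_1,N_2)}=S_{(N_1,M)}-S_{(N_2+1,M)}$, followed by uniform-in-$M$ bounds and $M\to\infty$. It rests on two kernel estimates your proposal never produces and which do not follow from the $\int_0^\infty|\partial_t K_t|\,dt$ bounds: the pointwise-in-$t$ decay $\sup_{t>0}|\partial_t K_t^{(\alpha,\beta)}(n,m)|\le C|n-m|^{-3}$ (Proposition \ref{prop3.1}, proved by iterating \cite[Lemma 5.1]{ACL1} three times) and the uniform bound $|\partial_t K_t^{(\alpha,\beta)}(n,m)|\le Ct^{-3/2}$, which together yield the lacunary tail estimates of Proposition \ref{prop3.2}. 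Relatedly, you misplace the main obstacle: the estimates on $\partial_t K_t^{(\alpha,\beta)}$ that you flag as the delicate step are precisely \eqref{eq2.1} and \eqref{eq2.2}, already established in the proof of Theorem \ref{teo1.1} and simply reused here; the genuinely new work in Theorem \ref{teo1.2} is the Cotlar machinery just described.
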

	Ben Salem (\cite{Be}) solved an initial value problem associated with a fractional diffusion equation involving fractional powers of the Jacobi operator,   {$(J^{(\alpha,\beta)})^\gamma$}, and Caputo fractional derivatives in time. By using subordination, from  Theorems \ref{teo1.1} and \ref{teo1.2} we can deduce the corresponding results when  $\{W_t^{(\alpha,\beta)}\}_{t>0}$ is replaced by the semigroup of operators generated by {$(J^{(\alpha,\beta)})^\gamma$}, $\gamma>0$. 

In the next section we prove Theorems \ref{teo1.1} and \ref{teo1.2}. Throughout this paper, we will always denote by $C$ and $c$ positive constants that can change in each occurrence.

\section{Proof of Theorem  \ref{teo1.1}  }\label{sec2}

\subsection{Proof of Theorem  \ref{teo1.1}  for $\V_\rho(\{ W_t^{(\alpha,\beta)}\}_{t>0})$}\label{subsec2.1}\textcolor{white}{}\newline

First, we shall prove that  $\V_\rho(\{ W_t^{(\alpha,\beta)}\}_{t>0})$ is bounded from ${\ell^2({\N_0})}$ into itself.

We have that $J^{(\alpha,\beta)}p_n^{(\alpha,\beta)}(x)=(x-1) p_n^{(\alpha,\beta)}(x)$, $x\in (-1,1)$ and $n\in{\N_0}$. Hence,  $J^{(\alpha,\beta)}p_n^{(\alpha,\beta)}(1)=0$, $n\in{\N_0}$. We consider the operator $\tilde{J}^{(\alpha,\beta)}$ defined by
$$
\tilde{J}^{(\alpha,\beta)}(f)(n)=\frac{1}{p_n^{(\alpha,\beta)}(1)}J^{(\alpha,\beta)}(p_{\cdot}^{(\alpha,\beta)}f)(n), \;\:n\in{\N_0},
$$
and the weight $v^{(\alpha,\beta)}=\{p_n^{(\alpha,\beta)}(1)\}_{n\in{\N_0}}$.

Let $t>0$. We define the operator $\tilde{W}_t^{(\alpha,\beta)}$ on $\ell^p({\N_0}, v^{(\alpha,\beta)})$, $1\le p\le\infty$ by
$$
\tilde{W}_t^{(\alpha,\beta)}(f)(n)=\frac{1}{p_n^{(\alpha,\beta)}(1)}{W}_t^{(\alpha,\beta)}(p_{\cdot}^{(\alpha,\beta)}f)(n),\;\:n\in{\N_0}.
$$
We can write, for every $f\in\ell^p({\N_0},v^{(\alpha,\beta)})$, $1\le p<\infty$, 
$$
\tilde{W}_t^{(\alpha,\beta)}(f)(n)=\sum_{m=0}^\infty f(m)\tilde{K}_t^{(\alpha,\beta)}(n,m)(p_m^{(\alpha,\beta)}(1))^2, \:\;n\in{\N_0},
$$
where 
$$
\tilde{K}_t^{(\alpha,\beta)}(n,m)=\frac{{K}_t^{(\alpha,\beta)}(n,m)}{p_n^{(\alpha,\beta)}(1)p_m^{(\alpha,\beta)}(1)},\:\;n,m\in{\N_0}.
$$
Since $\alpha\ge\beta\ge -1/2$, see \cite[Theorem 1]{Ga2}, according to \cite[Theorem {3.2}]{ACL1}, we have that $K_t^{(\alpha,\beta)}(n,m)\ge 0$ and therefore  $\tilde{K}_t^{(\alpha,\beta)}(n,m)\ge 0$, $n,m\in{\N_0}$.

The family $\{\tilde{W}_s^{(\alpha,\beta)}\}_{s>0}$ is the semigroup of operators generated by $\tilde{J}^{(\alpha,\beta)}$ in $\ell^p({\N_0},v^{(\alpha,\beta)})$, $1\le p\le\infty$. Since ${J}^{(\alpha,\beta)}p_n^{(\alpha,\beta)}(1)=0,$ $n\in{\N_0}$, we deduce that  $\tilde{W}_s^{(\alpha,\beta)}(1)(n)=1$, $n\in{\N_0}$, that is, the semigroup $\{\tilde{W}_s^{(\alpha,\beta)}\}_{s>0}$ is Markovian. Furthermore, by using Jensen inequality we deduce that
$$
|\tilde{W}_t^{(\alpha,\beta)}(f)(n)|^p\le \sum_{m=0}^\infty \tilde{K}_t^{(\alpha,\beta)}(n,m)(p_m^{(\alpha,\beta)}(1))^2|f(m)|^p, \quad n\in{\N_0} \text{ and } t>0,
$$
for every $1\le p<\infty$. Since $\tilde{K}_t^{(\alpha,\beta)}(n,m)=\tilde{K}_t^{(\alpha,\beta)}(m,n)$, $n,m\in{\N_0}$, it follows that $\tilde{W}_t^{(\alpha,\beta)}$ is a contraction in $\ell^p({\N_0},v^{(\alpha,\beta)})$, for every $1\le p\le \infty$, and it is selfadjoint on $\ell^2({\N_0},v^{(\alpha,\beta)})$.

We have proved that  $\{\tilde{W}_t^{(\alpha,\beta)}\}_{t>0}$  is a diffusion semigroup in the Stein's sense (\cite{StLP}).

According to \cite[Corollary 4.5]{LX} (see also \cite[{Theorem 3.3}]{JRe}) we have that the $\rho$-variation operator  $\V_\rho(\{ \tilde{W}_t^{(\alpha,\beta)}\}_{t>0})$ is bounded from  $\ell^p({\N_0},v^{(\alpha,\beta)})$ into itself, for every $1< p<\infty$. By taking into account that
$$
\V_\rho(\{ \tilde{W}_t^{(\alpha,\beta)}\}_{t>0})(f)(n)=\frac{1}{p_n^{(\alpha,\beta)}(1)}\V_\rho(\{ {W}_t^{(\alpha,\beta)}\}_{t>0})(p_\cdot^{(\alpha,\beta)}(1)(f))(n), \quad n\in{\N_0},
$$
we deduce that  $\V_\rho(\{ {W}_t^{(\alpha,\beta)}\}_{t>0})$ is  bounded from  ${\ell^2(\N_0)}$ into itself.

Now we shall use Calder\'on-Zygmund theory for vector-valued singular integrals (see \cite[Theorem 2.1]{BCFR}). If $g$ is a complex-valued function defined on $(0,\infty)$, we define 
$$
\|g\|_\rho=\sup_{\substack{0<t_n<t_{n-1}<\cdots<t_1\\n\in\N}}\left(\sum_{j=1}^{n-1}|g(t_j)-g(t_{j+1})|^\rho \right)^{1/\rho},
$$
and the linear space $E_\rho$ that consists of all those $g:(0,\infty)\to\C$ such that $\|g\|_\rho<\infty.$ It is clear that  $\|g\|_\rho=0$ if, and only if, $g$ is constant. By identifying those functions that differ in a constant, $\|\cdot\|_\rho$ is a norm in  $E_\rho$ and $( E_\rho,\|\cdot\|_\rho)$ is a Banach space.

We can write
$$
\V_\rho(\{ {W}_t^{(\alpha,\beta)}\}_{t>0})(f)(n)=\|  {W}_t^{(\alpha,\beta)}(f)(n)\|_\rho, \quad n\in{\N_0}.
$$

 $\|\cdot\|_\rho$ is not a Hilbert norm. Then, a transplantation theorem can not be applied, in contrast with the case of Littlewood-Paley functions considered in \cite{ACL3}. 
 
 We are going to see that 
 \begin{equation}\label{eq2.1}
 \| {K}_t^{(\alpha,\beta)}(n,m)\|_\rho\le \frac{C}{|n-m|}, \quad n,m\in{\N_0}, \: n\neq m,
 \end{equation}
 and
 \begin{equation}\label{eq2.2}
 \|{K}_t^{(\alpha,\beta)}(n,m)-{K}_t^{(\alpha,\beta)}(l,m) \|_\rho\le C\frac{|n-l|}{|n-m|^2}, \quad |n-m|>2|n-l|,\: \frac{m}{2}\le n,l\le \frac{3m}{2}.
 \end{equation}
 
 First, we prove \eqref{eq2.1}. According to \cite[Lemma 5.1]{ACL1}, we have that
 \begin{align*}
 {K}_t^{(\alpha,\beta)}(n,m)&=w_n^{(\alpha,\beta)}w_m^{(\alpha,\beta)}\frac{(n+\alpha+\beta+1)(m+\alpha+\beta+1)}{2(n-m)(n+m+\alpha+\beta+1)}t\Bigg( \frac{1}{m+\alpha+\beta+1} H_t^{(\alpha,\beta)}(n,m)\\
 &\qquad-\frac{1}{n+\alpha+\beta+1}H_t^{(\alpha,\beta)}(m,n)\Bigg),\quad n,m\in{\N},\:\; n\neq m \text{ and } t>0,
 \end{align*}
where, for $k,l\in\N, \:k\ge 1 \text{ and } t>0$,
$$
H_t^{(\alpha,\beta)}(k,l)=\int_{-1}^1e^{-t(1-x)}P_{k-1}^{(\alpha+1,\beta+1)}(x)P_{l}^{(\alpha,\beta)}(x)(1-x)^{\alpha+1}(1+x)^{\beta+1}dx.
$$

Since $w_n^{(\alpha,\beta)}\sim \sqrt{n}$, $n\in\N$, in order to prove \eqref{eq2.1} when $n,m\in{\N}, $ $n\neq m$, it is sufficient to see that
$$
\|tH_t^{(\alpha,\beta)}(n,m)\|_\rho\le\frac{C}{\sqrt{nm}},\quad  n,m\in{\N},\:\; n\neq m. 
$$

Let $n,m\in{\N},\:\; n\neq m. $ We decompose
$$
H_t^{(\alpha,\beta)}(n,m)=H_{t,1}^{(\alpha,\beta)}(n,m)+H_{t,2}^{(\alpha,\beta)}(n,m),\quad t>0,
$$
where
$$
H_{t,1}^{(\alpha,\beta)}(n,m)=\int_0^1e^{-t(1-x)}P_{n-1}^{(\alpha+1,\beta+1)}(x)P_{m}^{(\alpha,\beta)}(x)(1-x)^{\alpha+1}(1+x)^{\beta+1}dx,\quad\; t>0.
$$
Suppose that $g:(0,\infty)\to \C$ is a differentiable function. We can write
\begin{align}\label{eq2.3}
\|g\|_\rho&=\sup_{\substack{0<t_n<t_{n-1}<\cdots<t_1\\n\in\N}}\left(\sum_{j=1}^{n-1}|g(t_j)-g(t_{j+1})|^\rho \right)^{1/\rho}\nonumber\\
&\le \sup_{\substack{0<t_n<t_{n-1}<\cdots<t_1\\n\in\N}}\left(\sum_{j=1}^{n-1}\left|\int_{t_{j+1}}^{t_{j}}g'(t)\;dt\right|^\rho \right)^{1/\rho}\nonumber\\
&\le \sup_{\substack{0<t_n<t_{n-1}<\cdots<t_1\\n\in\N}}\sum_{j=1}^{n-1}\left|\int_{t_{j+1}}^{t_{j}}g'(t)\;dt\right| 
\le \int_{0}^\infty|g'(t)|\;dt. 
\end{align}
We will use \eqref{eq2.3} several times in the sequel.

According to \cite[(7.32.6)]{Sz}, we have that
\begin{equation}\label{eq2.4}
|P_k^{(\alpha,\beta)}(x)|\le \frac{C}{\sqrt{k}}(1-x)^{-\alpha/2-1/4}(1+x)^{-\beta/2-1/4},\quad x\in(-1,1) \text{ and } k\in{\N}. 
\end{equation}
By using \eqref{eq2.3} and \eqref{eq2.4}, we get 
\begin{align}\label{eq2.5}
\|tH_{t,2}^{(\alpha,\beta)}(n,m)\|_\rho&\le\int_{0}^\infty\left|\frac{d}{dt}\left(tH_{t,2}^{(\alpha,\beta)}(n,m)\right) \right|dt\nonumber\\
&\le \frac{C}{\sqrt{nm}}\int_0^\infty \int_{-1}^0 e^{-t(1-x)}(t(1-x)+1)dxdt\le \frac{C}{\sqrt{nm}}.
\end{align}
On the other hand, since $P_0^{(\alpha+1,\beta+1)}(x)=1$, $x\in(-1,1)$, it follows that
$$
H_{t,1}^{(\alpha,\beta)}(1,m)=\int_0^1e^{-t(1-x)}P_{m}^{(\alpha,\beta)}(x)(1-x)^{\alpha+1}(1+x)^{\beta+1}dx,\quad t>0.
$$
Then, \eqref{eq2.4} leads to
\begin{align*}
\|tH_{t,1}^{(\alpha,\beta)}(1,m)\|_\rho&\le \frac{C}{\sqrt{m}}\int_0^\infty \int_0^1 e^{-t(1-x)}(t(1-x)+1)(1-x)^{{\alpha/2+}3/4}(1+x)^{{\beta/2+}3/4}\;dxdt\\
&\le \frac{C}{\sqrt{m}}.
\end{align*}
In \cite[Theorem 8.21.12]{Sz}, it was established that
\begin{align}\label{eq2.6}
\left(\sin\frac{\theta}{2}\right)^\alpha\left(\cos\frac{\theta}{2}\right)^\beta P_l^{(\alpha,\beta)}(\cos\theta)&=\gamma_l^{-\alpha}\frac{\Gamma(l+\alpha+1)}{\Gamma(l+1)}\left(\frac{\theta}{\sin\theta}\right)^{1/2}J_\alpha(\gamma_l\theta)\nonumber\nonumber\\
&\quad+\left\{\begin{array}{ll}
\theta^{1/2}O(l^{-3/2}),\quad \frac{c}{l}\le \theta\le n-\epsilon,\\
\theta^{\alpha+2}O(l^{\alpha}),\quad 0< \theta<\frac{c}{l},
\end{array}\right. \quad l\in{\N},
\end{align}
where $\gamma_l=l+\frac{\alpha+\beta+1}{2}$. Here, $c$ and $\epsilon$ are fixed positive numbers. By \cite[{(5.16.1)}]{Leb} we have that
\begin{align}\label{eq2.7}
J_\alpha(z)\le C\left\{\begin{array}{ll} 
z^\alpha,\quad 0<z<1,\\
z^{-1/2},\quad z\ge 1.
\end{array}\right.
\end{align}
We define 
\begin{align*}
F_l^{(\alpha,\beta)}(\theta)=P_l^{(\alpha,\beta)}(\cos\theta)-\gamma_l^{-\alpha}\frac{\Gamma(l+\alpha+1)}{\Gamma(l+1)}\left(\sin\frac{\theta}{2}\right)^{-\alpha}\left(\cos\frac{\theta}{2}\right)^{-\beta}\left(\frac{\theta}{\sin\theta}\right)^{1/2}&J_\alpha(\gamma_l\theta),\\ \quad\theta\in\left(0,\frac{\pi}{2}\right) \text{ and } l\in{\N}. 
\end{align*}
Assume now that $n>1$. By performing the change of variables $x=\cos \theta$, we can write
\begin{align*}
&H_{t,1}^{(\alpha,\beta)}(n,m)={2^{\alpha+\beta+3}}\int_0^{\frac{\pi}{2}}e^{-t(1-\cos\theta)}P_{n-1}^{(\alpha+1,\beta+1)}(\cos\theta)P_{m}^{(\alpha,\beta)}(\cos\theta)\left(\sin\frac{\theta}{2}\right)^{2\alpha+3}\left(\cos\frac{\theta}{2}\right)^{2\beta+3}d\theta\\
&={2^{\alpha+\beta+3}}\Bigg[\int_0^{\frac{\pi}{2}}e^{-t(1-\cos\theta)}F_{n-1}^{(\alpha+1,\beta+1)}(\theta)F_{m}^{(\alpha,\beta)}(\theta)\left(\sin\frac{\theta}{2}\right)^{2\alpha+3}\left(\cos\frac{\theta}{2}\right)^{2\beta+3}d\theta\\
&+\gamma_m^{-\alpha}\frac{\Gamma(m+\alpha+1)}{\Gamma(m+1)}\int_0^{\frac{\pi}{2}}e^{-t(1-\cos\theta)}F_{n-1}^{(\alpha+1,\beta+1)}(\theta)\left(\frac{\theta}{\sin\theta}\right)^{1/2}J_\alpha(\gamma_m\theta)\left(\sin\frac{\theta}{2}\right)^{\alpha+3}\left(\cos\frac{\theta}{2}\right)^{\beta+3}d\theta\\
&+\gamma_{n}^{-\alpha-1}\frac{\Gamma({n+\alpha})}{\Gamma(n)}\int_0^{\frac{\pi}{2}}e^{-t(1-\cos\theta)}F_{m}^{(\alpha,\beta)}(\theta)\left(\frac{\theta}{\sin\theta}\right)^{1/2}J_{\alpha+1}(\gamma_{n}\theta)\left(\sin\frac{\theta}{2}\right)^{\alpha+2}\left(\cos\frac{\theta}{2}\right)^{\beta+2}d\theta\\
&+\frac{\gamma_{n}^{-\alpha-1}\gamma_m^{-\alpha}\Gamma({n+\alpha})\Gamma(m+\alpha+1)}{{2}\Gamma(n)\Gamma(m+1)}\int_0^{\frac{\pi}{2}}e^{-t(1-\cos\theta)}\theta J_{\alpha+1}(\gamma_{n}\theta)J_{\alpha}(\gamma_m\theta)\sin\frac{\theta}{2}\cos\frac{\theta}{2}d\theta\Bigg]\\
&:=\sum_{j=1}^4 H_{t,1,j}^{(\alpha,\beta)}(n,m),\qquad t>0.
\end{align*}
Suppose that $m>n$. By \eqref{eq2.6} we get that
\begin{align*}
|\partial_t(tH_{t,1,1}^{(\alpha,\beta)}(n,m))|&\le Cn^{\alpha+1}m^\alpha\int_0^{\frac{1}{m}}e^{-ct\theta^2}(1+t\theta^2)\theta^{2\alpha+7}d\theta \\
&\quad+Cn^{\alpha+1}m^{-3/2}\int_{\frac{1}{m}}^{\frac{1}{n}}e^{-ct\theta^2}(1+t\theta^2)\theta^{\alpha+\frac{11}{2}}d\theta\\
&\quad+C(nm)^{-3/2}\int_{\frac{1}{n}}^{\frac{\pi}{2}}e^{-ct\theta^2}(1+t\theta^2)\theta^{3}d\theta, \qquad t>0.
\end{align*}
Then,
\begin{align*}
\int_0^\infty|\partial_t(tH_{t,1,1}^{(\alpha,\beta)}(n,m))|dt&\le Cn^{\alpha+1}m^\alpha\int_0^{\frac{1}{m}}\theta^{2\alpha+5}d\theta 
+Cn^{\alpha+1}m^{-3/2}\int_{\frac{1}{m}}^{\frac{1}{n}}\theta^{\alpha+\frac{7}{2}}d\theta\\
&\quad+C(nm)^{-3/2}\int_{\frac{1}{n}}^{\frac{\pi}{2}}\theta \;d\theta\\
&\le\frac{C}{(nm)^{3/2}}.
\end{align*}
Since $\gamma_k\sim k$, $k\in{\N}$, \eqref{eq2.6} and \eqref{eq2.7} lead to 
\begin{align*}
|\partial_t(tH_{t,1,2}^{(\alpha,\beta)}(n,m))|&\le Cn^{\alpha+1}m^\alpha\int_0^{\frac{1}{m}}e^{-ct\theta^2}(1+t\theta^2)\theta^{2\alpha+5}d\theta \\
&\quad+Cn^{\alpha+1}m^{-1/2}\int_{\frac{1}{m}}^{\frac{1}{n}}e^{-ct\theta^2}(1+t\theta^2)\theta^{\alpha+\frac{9}{2}}d\theta\\
&\quad+Cn^{-3/2}m^{-1/2}\int_{\frac{1}{n}}^{\frac{\pi}{2}}e^{-ct\theta^2}(1+t\theta^2)\theta^{2}d\theta, \qquad t>0.
\end{align*}
It follows that
\begin{align*}
\int_0^\infty|\partial_t(tH_{t,1,2}^{(\alpha,\beta)}(n,m))|dt&\le Cn^{\alpha+1}m^\alpha\int_0^{\frac{1}{m}}\theta^{2\alpha+3}d\theta 
+Cn^{\alpha+1}m^{-1/2}\int_{\frac{1}{m}}^{\frac{1}{n}}\theta^{\alpha+\frac{5}{2}}d\theta\\
&\quad+Cn^{-3/2}m^{-1/2}\int_{\frac{1}{n}}^{\frac{\pi}{2}}\;d\theta\\
&\le\frac{C}{n^{3/2}m^{1/2}}.
\end{align*}
Similarly, we obtain that 
\begin{align*}
\int_0^\infty|\partial_t(tH_{t,1,3}^{(\alpha,\beta)}(n,m))|dt
&\le\frac{C}{n^{1/2}m^{3/2}}.
\end{align*}
Thus, we conclude that 
$$
\sum_{j=1}^3\int_0^\infty|\partial_t(t H_{t,1,j}^{(\alpha,\beta)}(n,m))|dt\le\frac{C}{\sqrt{nm}}.
$$
We are going to see that

$$
\int_0^\infty|\partial_t(t Z_t^{\alpha}(n,m))|dt\le\frac{C}{\sqrt{nm}},
$$
where
$$
Z_t^{\alpha}(n,m)=\int_0^{\frac{\pi}{2}}e^{-t(1-\cos\theta)}\theta J_{\alpha+1}(\gamma_{n}\theta)J_{\alpha}(\gamma_m\theta)\sin\frac{\theta}{2}\cos\frac{\theta}{2}d\theta, \quad t>0.
$$
Again, since $\gamma_k\sim k$, $k\in{\N}$,   by using \eqref{eq2.7} we get
\begin{align*}
&\left| \partial_t\left(t Z_t^{\alpha}(n,m)-t\int_{\frac{1}{n}}^{\frac{\pi}{2}}e^{-t(1-\cos\theta)}\theta J_{\alpha+1}(\gamma_{n}\theta)J_{\alpha}(\gamma_m\theta)\sin\frac{\theta}{2}\cos\frac{\theta}{2}d\theta\right)\right|\\
&=\left| \partial_t\left(t\int_0^{\frac{1}{n}}e^{-t(1-\cos\theta)}\theta J_{\alpha+1}(\gamma_{n}\theta)J_{\alpha}(\gamma_m\theta)\sin\frac{\theta}{2}\cos\frac{\theta}{2}d\theta\right)\right|\\
&\le C\left( n^{\alpha+1}m^\alpha\int_0^{\frac{1}{m}}e^{-c\theta^2t}\theta^{2\alpha+3}d\theta+n^{\alpha+1}m^{-1/2}\int_{\frac{1}{m}}^{\frac{1}{n}}e^{-c\theta^2t}\theta^{\alpha+5/2}d\theta  \right), \quad t>0.
\end{align*}
Then,
\begin{align*}
\int_0^\infty&\left| \partial_t\left(t Z_t^{\alpha}(n,m)-t\int_{\frac{1}{n}}^{\frac{\pi}{2}}e^{-t(1-\cos\theta)}\theta J_{\alpha+1}(\gamma_{n}\theta)J_{\alpha}(\gamma_m\theta)\sin\frac{\theta}{2}\cos\frac{\theta}{2}d\theta\right)\right|dt\\
&\le C\left( n^{\alpha+1}m^\alpha\int_0^{\frac{1}{m}}\theta^{2\alpha+1}d\theta+n^{\alpha+1}m^{-1/2}\int_{\frac{1}{m}}^{\frac{1}{n}}\theta^{\alpha+1/2}d\theta  \right)\\
&\le C\left(\frac{n^{\alpha+1}m^\alpha}{m^{2\alpha+2}}+\frac{n^{\alpha+1}m^{-1/2}}{n^{\alpha+3/2}}\right)\le C\left(\frac{1}{m} +\frac{1}{\sqrt{nm}}\right)\le\frac{C}{\sqrt{nm}}.
\end{align*}
According to \cite[{(5.11.6)}]{Leb}, we have that
\begin{equation}\label{eq2.8}
J_\alpha(z)=\sqrt{\frac{2}{\pi z}}\cos\left(z-\frac{\alpha\pi}{2}-\frac{\pi}{4}\right)+g_\alpha(z), \quad z>0,
\end{equation}
where $|g_\alpha(z)|\le C z^{-3/2}, \quad z\ge 1.$

We define, 
$$
Q_t^\alpha(n,m)=\int_{\frac{1}{n}}^{\frac{\pi}{2}}e^{-t(1-\cos\theta)}\theta J_{\alpha+1}(\gamma_{n}\theta)J_{\alpha}(\gamma_m\theta)\sin\frac{\theta}{2}\cos\frac{\theta}{2}d\theta,\quad t>0.
$$

We can write
\begin{align*}
Q_t^\alpha(n,m)&=\frac{1}{\pi\sqrt{nm}}\int_{\frac{1}{n}}^{\frac{\pi}{2}}e^{-t(1-\cos\theta)} \cos\left(\gamma_{n}\theta-\frac{(\alpha+1)\pi}{2}-\frac{\pi}{4}\right)\cos\left(\gamma_m\theta-\frac{\alpha\pi}{2}-\frac{\pi}{4}\right)\sin\theta \;d\theta\\
&+\frac{1}{\sqrt{2\pi n}}\int_{\frac{1}{n}}^{\frac{\pi}{2}}e^{-t(1-\cos\theta)} \cos\left(\gamma_{n}\theta-\frac{(\alpha+1)\pi}{2}-\frac{\pi}{4}\right)g_\alpha(\gamma_m\theta)\sqrt{\theta}\sin\theta \;d\theta\\
&+\frac{1}{\sqrt{2\pi m}}\int_{\frac{1}{n}}^{\frac{\pi}{2}}e^{-t(1-\cos\theta)}g_{\alpha+1}(\gamma_{n}\theta) \cos\left(\gamma_m\theta-\frac{\alpha\pi}{2}-\frac{\pi}{4}\right)\sqrt{\theta}\sin\theta \;d\theta\\
&+{\frac{1}{2}}\int_{\frac{1}{n}}^{\frac{\pi}{2}}e^{-t(1-\cos\theta)}g_{\alpha+1}(\gamma_{n}\theta) g_\alpha(\gamma_m\theta){\theta}\sin\theta \;d\theta\\
&=\sum_{j=1}^4Q_{t,j}^\alpha(n,m),\quad t>0.
\end{align*}
By using \eqref{eq2.8}, we get
\begin{align*}
\sum_{j=2}^4\int_0^\infty |\partial_t(tQ_{t,j}^\alpha(n,m))|dt&\le C\Bigg(\frac{1}{n^{1/2}m^{3/2}} +\frac{1}{n^{3/2}m^{1/2}}\Bigg)\int_0^\infty\int_{\frac{1}{n}}^{\frac{\pi}{2}} e^{-ct\theta^2}d\theta dt\\&\quad+\frac{C}{(nm)^{3/2}}\int_0^\infty\int_{\frac{1}{n}}^{\frac{\pi}{2}} e^{-ct\theta^2}\frac{d\theta}{\theta^3} dt  \\&\le C\left(\frac{\sqrt{n}}{m^{3/2}}+\frac{1}{\sqrt{nm}}\right)\le \frac{C}{\sqrt{nm}}.
\end{align*}
Our next objective is to see that
$$
\int_0^\infty |\partial_t(tQ_{t,1}^\alpha(n,m))|dt\le\frac{C}{\sqrt{nm}}.
$$
A straightforward manipulation leads to
\begin{align}\label{eq2.9}
2\cos&\left(\gamma_{n}\theta-\frac{(\alpha+1)\pi}{2}-\frac{\pi}{4}\right)\cos\left(\gamma_m\theta-\frac{\alpha\pi}{2}-\frac{\pi}{4}\right)=2\sin\left(\gamma_{n}\theta-\eta\right)\cos\left(\gamma_m\theta-\eta\right)\nonumber\\
&=\sin\left((\gamma_{n}+\gamma_m)\theta-2\eta\right)+\sin\left((\gamma_{n}-\gamma_m)\theta\right)\nonumber\\
&=\cos(2\eta)(\sin((n+m)\theta)(\cos(\rho\theta)-1)+\sin((n+m)\theta)+\sin(\rho\theta)\cos((n+m)\theta))\nonumber\\&\:-\sin(2\eta)(\cos((n+m)\theta)(\cos(\rho\theta)-1)+\cos((n+m)\theta){-}\sin(\rho\theta)\sin((n+m)\theta))\nonumber\\&\:+\sin((n-m)\theta),
\end{align}
where $\eta=\frac{\alpha\pi}{2}+\frac{\pi}{4}$ and $\rho=\alpha+\beta+1$.

We consider
$$
R_t(n,m)=t\int_{\frac{1}{n}}^{\frac{\pi}{2}}e^{-t(1-\cos\theta)}\sin((n-m)\theta)\sin\theta d\theta, \quad t>0.
$$
We shall prove that
\begin{equation}\label{eq2.10}
\int_0^\infty|\partial_t R_t(n,m)|dt\le C.
\end{equation}
By partial integration we obtain that
\begin{align*}
R_t(n,m)&=-t\int_{\frac{1}{n}}^{\frac{\pi}{2}}e^{-t(1-\cos\theta)}\frac{d}{d\theta}\left(\frac{\cos((n-m)\theta)}{n-m}\right)\sin\theta d\theta\\&=t(S_{n,m}(t,\pi/2)-S_{n,m}(t,1/n)-\mathbb{R}_t(n,m)),\quad t>0,
\end{align*}
where
$$
S_{n,m}(t,\theta)=e^{-t(1-\cos\theta)}\frac{\cos((n-m)\theta)}{m-n}\sin\theta,\quad \theta\in \left(0,\frac{\pi}{2}\right) \text{ and } t>0,
$$ and
$$
\mathbb{R}_t(n,m)=\int_{\frac{1}{n}}^{\frac{\pi}{2}}e^{-t(1-\cos\theta)}\frac{\cos((n-m)\theta)}{m-n}(-t\sin^2\theta+\cos\theta) d\theta,\quad t>0.
$$
We have that
\begin{align*}
	\int_0^\infty|\partial_t[tS_{n,m}(t,\pi/2)]|dt\le \frac{C}{m-n}\int_0^\infty (1+t)e^{-t}dt\le \frac{C}{m-n}
	\end{align*}
	and
	\begin{align*}
	\int_0^\infty|\partial_t[tS_{n,m}(t,{1}/{n})]|dt&\le \frac{C}{m-n}\int_0^\infty \left(1+t\left(1-\cos\frac{1}{n}\right)\right)e^{-t\left(1-\cos\frac{1}{n}\right)}\sin\frac{1}{n}dt\\
	&\le \frac{C}{n(m-n)}\int_0^\infty e^{\frac{-ct}{n^2}}dt\le C\frac{n}{m-n}.
	\end{align*}
We also get
\begin{align*}
\int_0^\infty|\partial_t(t\mathbb{R}_t(n,m))|dt&\le\frac{C}{m-n}\int_0^\infty\int_{\frac{1}{n}}^{\frac{\pi}{2}}e^{-ct\theta^2} (t\theta^2+1+t^2\theta^4)d\theta dt\le \frac{C}{m-n}\int_{\frac{1}{n}}^{\frac{\pi}{2}}\frac{d\theta}{\theta^2}\\&=C\frac{n}{m-n}.
\end{align*}

We conclude that
\begin{equation*}
\int_0^\infty|\partial_t R_t(n,m)|dt\le C\frac{n}{m-n}\le C,
\end{equation*}
provided that $m>2n$.

By proceeding in a similar way we can see that
\begin{align*}
\int_0^\infty\Big|\partial_t\Big[t\int_{\frac{1}{n}}^{\frac{\pi}{2}}&e^{-t(1-\cos\theta)}\sin\theta\big(\cos(2\eta)[\sin((n+m)\theta)(\cos(\rho\theta)-1)+\sin((n+m)\theta)\\&+\sin(\rho\theta)\cos((n+m)\theta)]\nonumber-\sin(2\eta)[\cos((n+m)\theta)(\cos(\rho\theta)-1)\\&+\cos((n+m)\theta){-}\sin(\rho\theta)\sin((n+m)\theta)]\big)d\theta\Big]\Big|dt\le C.
\end{align*}
Note that the last inequality holds for every $n,m\in\N$.

Suppose that $1<m-n<n$. We decompose $R_t(n,m)$ as follows
\begin{align*}
R_t(n,m)&=t\int_{\frac{1}{n}}^{\frac{1}{m-n}}e^{-t(1-\cos\theta)}\sin((n-m)\theta)\sin\theta \;d\theta\\
&+t\int_{\frac{1}{m-n}}^{\frac{\pi}{2}}e^{-t(1-\cos\theta)}\sin((n-m)\theta)\sin\theta \;d\theta\\
&=R_t^1(n,m)+R_t^2(n,m),\quad t>0.
\end{align*}
We get
\begin{align*}
\int_0^\infty|\partial_tR_t^1(n,m)|dt&\le C\int_0^\infty\int_{\frac{1}{n}}^{\frac{1}{m-n}}e^{-ct\theta^2}(1+t\theta^2)(m-n)\theta^2d\theta dt\\
&\le C(m-n)\int_{\frac{1}{n}}^{\frac{1}{m-n}}d\theta\le C.
\end{align*}
On the other hand, by proceeding as in the proof of \eqref{eq2.10}, we can see that
$$
\int_0^\infty|\partial_tR_t^2(n,m)|dt\le C.
$$
We conclude that 
$$
\int_0^\infty|\partial_tR_t(n,m)|dt\le C.
$$
By combining all above estimates we prove that
$$
\|tH_{t}^{(\alpha,\beta)}(n,m)\|_\rho\le \frac{C}{\sqrt{nm}},\quad n,m\in{\N}, \: m>n.
$$
Also, the same arguments allow us to obtain that
$$
\|tH_{t}^{(\alpha,\beta)}(n,m)\|_\rho\le \frac{C}{\sqrt{nm}},\quad n,m\in{\N}, \: n>m.
$$
Thus, we have proved that
$$
\|K_{t}^{(\alpha,\beta)}(n,m)\|_\rho\le \frac{C}{|n-m|},\quad n,m\in{\N}, \: m\neq n.
$$
Let now $m\in{\N}.$ According to \cite[Lemma 5.1]{ACL1}, we have that
$$
K_{t}^{(\alpha,\beta)}(0,m)=w_0^{(\alpha,\beta)}w_m^{(\alpha,\beta)}\frac{t}{2m}\mathcal{H}_t^{(\alpha,\beta)}(m), \quad t>0,
$$
where
$$
\mathcal{H}_t^{(\alpha,\beta)}(m)=\int_{-1}^1 e^{-t(1-x)}P_{m-1}^{(\alpha+1,\beta+1)}(x)(1-x)^{\alpha+1}(1+x)^{\beta+1}dx,\quad t>0.
$$
By using \eqref{eq2.4}, we get
$$
|\partial_t[t\mathcal{H}_t^{(\alpha,\beta)}(m)]|\le\frac{C}{\sqrt{m}}\int_{-1}^1 e^{-t(1-x)}((1-x)t+1)(1-x)^{{\frac{\alpha}{2}+\frac{1}{4}}}(1+x)^{{\frac{\beta}{2}+\frac{1}{4}}}dx,\quad t>0.
$$
Then, since $w_k^{(\alpha,\beta)}\sim \sqrt{k}$, $k\in{\N}$, we obtain
$$
\| K_{t}^{(\alpha,\beta)}(0,m)\|_\rho\le\int_0^\infty|\partial_t[t\mathcal{H}_t^{(\alpha,\beta)}(m)]|dt\le \frac{C}{m}.
$$
Similarly, we get
$$
\| K_{t}^{(\alpha,\beta)}(m,0)\|_\rho\le \frac{C}{m}.
$$
Therefore, the proof of \eqref{eq2.1} is finished.

By proceeding as in \cite[pp. 13-14]{ACL1}, we can see that in order to prove \eqref{eq2.2}, it is sufficient to establish that
\begin{equation}\label{eq2.12}
\| K_{t}^{(\alpha,\beta)}(n+1,m)- K_{t}^{(\alpha,\beta)}(n,m) \|_\rho\le \frac{C}{|n-m|^2},
\end{equation}
for every $n,m\in\N$, $n\neq m$, $m/2\le n\le  3m/2$.

Suppose that $n,m\in{\N_0}$, $n\neq m$, $m/2\le n\le  3m/2$. Then, $n\neq 0\neq m$ and $m=2$ when $n=1$. Assume also that $(n,m)\neq(1,2)$.

By using \eqref{eq2.3} and the arguments in \cite[pp. 18-19]{ACL1} we can deduce that \eqref{eq2.12} holds once we will prove that
\begin{equation}\label{eq2.13}
\int_0^\infty |\partial_tD_t^{(\alpha,\beta)}(n,m)|dt\le \frac{C}{\sqrt{nm}\;|n-m|^2},
\end{equation}
where 
$$
D_t^{(\alpha,\beta)}(n,m)=\int_{-1}^1 e^{-t(1-x)}P_{n}^{(\alpha+1,\beta)}(x)P_{m}^{(\alpha,\beta)}(x)(1-x)^{\alpha+1}(1+x)^{\beta}dx,\quad t>0.
$$

According to \cite[Lemma 5.1 (a)]{ACL1}, we get
\begin{align*}
&D_t^{(\alpha,\beta)}(n,m)=\frac{(n+\alpha+\beta+2)(m+\alpha+\beta+1)}{2(n(n+\alpha+\beta+2)-m(m+\alpha+\beta+1))}\;\cdot\\
&\Bigg( \frac{t}{m+\alpha+\beta+1}I_t^{(\alpha+2,\beta+1,\alpha,\beta,\alpha+2,\beta+1)}{(n-1,m)}-\frac{t}{n+\alpha+\beta+2}I_t^{(\alpha+1,\beta,\alpha+1,\beta+1,\alpha+2,\beta+1)}{(n,m-1)}\\
&+\frac{1}{n+\alpha+\beta+2}I_t^{(\alpha+1,\beta,\alpha+1,\beta+1,\alpha+1,\beta+1)}{(n,m-1)}\Bigg), \quad t>0,
\end{align*}
where, as in \cite{ACL1},
$$
I_t^{(a,b,A,B,c,d)}{(k,l)}=\int_{-1}^1e^{-t(1-x)}P_{k}^{(a,b)}(x)P_{l}^{(A,B)}(x)(1-x)^{c}(1+x)^{d}dx,\quad k,l\in\N \text{ and } t>0.
$$
We have that
$$
n(n+\alpha+\beta+2)-m(m+\alpha+\beta+1)=(n-m)(n+m+\alpha+\beta+1)+n.
$$
Then, 
\begin{align*}
|n(n+\alpha+\beta+2)-m(m+\alpha+\beta+1)|&=\left\{\begin{array}{ll}
(n-m)(n+m+\alpha+\beta+1)+n, &\quad n>m\\
(m-n)(n+m+\alpha+\beta+1)-n, &\quad n<m
\end{array}\right.\\
&\ge \left\{\begin{array}{ll}
(n-m)(n+m+\alpha+\beta+1)+n, &\quad n>m\\
{(m-n)(m+\alpha+\beta+1)}, &\quad n<m
\end{array}\right.
\end{align*}
It follows that, for $k=n,m$,
$$
\frac{k+\alpha+\beta+2}{|n(n+\alpha+\beta+2)-m(m+\alpha+\beta+1)|}\le \frac{C}{|n-m|}.
$$
Then, 
\begin{align}\label{eq2.14}
D_t^{(\alpha,\beta)}&(n,m)=r_{n,m}^1 tI_t^{(\alpha+2,\beta+1,\alpha,\beta,\alpha+2,\beta+1)}{(n-1,m)}\nonumber\\&-r_{n,m}^2 tI_t^{(\alpha+1,\beta,\alpha+1,\beta+1,\alpha+2,\beta+1)}{(n,m-1)}
+r_{n,m}^3I_t^{(\alpha+1,\beta,\alpha+1,\beta+1,\alpha+1,\beta+1)}{(n,m-1)},
\end{align}
where $t>0$ and $|r_{n,m}^j|\le \frac{C}{|n-m|},$ $j=1,2,3.$

We have the following properties
\begin{enumerate}
	\item[(a)] Suppose that $n=m+k$, {$k\in\N$}. It follows that 
	\begin{align*}
	(n+\alpha+\beta+3)(n-1)-m&(m+\alpha+\beta+1)=(k{+\alpha+\beta+3})(m+k-1)\\&+(m+\alpha+\beta+3)(m+k-1)-m(m+\alpha+\beta+1)\\&\ge km,
	\end{align*}
	and
	\begin{align*}
	n(n+\alpha+\beta+2)-(m-1)(m+\alpha+\beta+2)&={(k+m)}({k+m}+\alpha+\beta+2)\\&-(m-1)(m+\alpha+\beta+2)\ge km,
	\end{align*}
	\item[(b)] Suppose that $m=n+k$, {$k\in\N$}. We get 
	\begin{align*}
	(n+\alpha+\beta+3)(n-1)-m(m+\alpha+\beta+1)&=(n+\alpha+\beta+3)(n-1)\\&\quad-(n+k)(n+k+\alpha+\beta+1)\\
	&=n-(\alpha+\beta+3)-k(2n+k+\alpha+\beta+1)\\&\le -kn,
	\end{align*}
	and
		\begin{align*}
	n(n+\alpha+\beta+2)-(m-1)(m+\alpha+\beta+2)&=n(n+\alpha+\beta+2)\\&\quad-(n+k-1)(n+k+\alpha+\beta+2)\\
	&=-nk-(k-1)(n{+}k+\alpha+\beta+2)\le -kn.
	\end{align*}
\end{enumerate}
By using again \cite[Lemma 5.1 (a)]{ACL1}, since $n\sim m$, $(a)$ y $(b)$ lead to  

\begin{align*}
|\partial_t&D_t^{(\alpha,\beta)}(n,m)|\le \frac{C}{|n-m|^2}\Big( t^2[|I_t^{(\alpha+3,\beta+2,\alpha,\beta,\alpha+4,\beta+2)}{(n-2,m)}|\\
&+|I_t^{(\alpha+2,\beta+1,\alpha+1,\beta+1,\alpha+4,\beta+2)}{(n-1,m-1)}|+|I_t^{(\alpha{+1},\beta,\alpha+2,\beta+2,\alpha+4,\beta+2)}{(n,m-2)}|]\\
&	t[|I_t^{(\alpha+3,\beta+2,\alpha,\beta,\alpha+3,\beta+2)}{(n-2,m)}|+|I_t^{(\alpha+2,\beta+1,\alpha+1,\beta+1,\alpha+3,\beta+2)}{(n-1,m-1)}|\\
&+|I_t^{(\alpha+1,\beta,\alpha+2,\beta+2,\alpha+3,\beta+2)}{(n,m-2)}|+|I_t^{(\alpha+2,\beta+1,\alpha+1,\beta+1,\alpha+4,\beta+1)}{(n-1,m-1)}|\\
&+|I_t^{(\alpha+2,\beta+1,\alpha+1,\beta+1,\alpha+3,\beta+1)}{(n-1,m-1)}|]\\
&+|I_t^{(\alpha+2,\beta+1,\alpha+1,\beta+1,\alpha+2,\beta+2)}{(n-1,m-1)}|+|I_t^{(\alpha+2,\beta+1,\alpha+1,\beta+1,\alpha+3,\beta+1)}{(n-1,m-1)}|\\
&+|I_t^{(\alpha+1,\beta,\alpha+2,\beta+2,\alpha+2,\beta+2)}{(n,m-2)}|+|I_t^{(\alpha+2,\beta+1,\alpha+1,\beta+1,\alpha+2,\beta+1)}{(n-1,m-1)}|\Big).
\end{align*}
By using \eqref{eq2.6} and \eqref{eq2.7} and by proceeding as in the first part of the proof we can see that
$$
\int_0^\infty |\partial_tD_t^{(\alpha,\beta)}(n,m)|dt \le\frac{C}{\sqrt{nm}|n-m|^2}.
$$
On the other hand, as in \eqref{eq2.14}, we obtain
\begin{align*}
D_t^{(\alpha,\beta)}&(1,2)=r_{1,2}^1 tI_t^{(\alpha+2,\beta+1,\alpha,\beta,\alpha+2,\beta+1)}{(0,2)}\nonumber\\&-r_{1,2}^2 tI_t^{(\alpha+1,\beta,\alpha+1,\beta+1,\alpha+2,\beta+1)}{(1,1)}
+r_{1,2}^3I_t^{(\alpha+1,\beta,\alpha+1,\beta+1,\alpha+1,\beta+1)}{(1,1)}, \quad t>0,
\end{align*}
where $|r_{1,2}^j|\le C$, $j=1,2,3.$ Then, by using \cite[Lemma 5.1 (a) y (b)]{ACL1} and proceeding as above, we conclude that 
$$\int_0^\infty|\partial_tD_t^{(\alpha,\beta)}(1,2)|dt\le C.$$
Thus \eqref{eq2.2} is proved.

According to \cite[Theorem 2.1]{BCFR}, we conclude that the operator $\V_\rho(\{ {W}_t^{(\alpha,\beta)}\}_{t>0})$ can be extended from $\ell^p({\N_0},w)\cap {\ell^2(\N_0)}$ to $\ell^p({\N_0},w)$ as a bounded operator 
\begin{itemize}
	\item[(i)] from $\ell^p({\N_0},w)$ into itself, for every $1<p<\infty$ and $w\in A^p({\N_0})$, 
		\item[(ii)] from $\ell^1({\N_0},w)$ into $\ell^{1,\infty}(\N,w)$, for every  $w\in A^1({\N_0})$.
\end{itemize}
\edproof

\subsection{Proof of Theorem \eqref{teo1.1} for jump operators}\textcolor{white}{}\newline

According to \cite[p. {6712}]{JSW}, we have that
$$
{\lambda}(\Lambda(\{W_t^{(\alpha,\beta)}\}_{t>0},\lambda)(f))^{1/\rho}\le 2^{1+\frac{1}{\rho}}\V_\rho(\{ {W}_t^{(\alpha,\beta)}\}_{t>0})(f),\quad \lambda>0.
$$
Therefore, properties for $\lambda$-jump operators stated in Theorem \eqref{teo1.1} are consequences of the corresponding ones for the variation operators.
\edproof

\subsection{Proof of Theorem \eqref{teo1.1} for oscillation operators}\textcolor{white}{}\newline

By keeping the notation from subsection \ref{subsec2.1}, for every $n\in{\N_0},$ we have that
$$
\OO(\{\tilde{W}_t^{(\alpha,\beta)}\}_{t>0},\{t_j\}_{j\in\N})(f)(n)=\frac{1}{p_n^{(\alpha,\beta)}(1)}(\OO(\{W_t^{(\alpha,\beta)}\}_{t>0},\{t_j\}_{j\in\N})(p_{\cdot}^{(\alpha,\beta)}(1)f)(n),
$$
According to \cite[p. 20]{LX} (see also \cite[{Theorem 3.3}]{JRe}), the oscillation operator $\OO(\{\tilde{W}_t^{(\alpha,\beta)}\}_{t>0},\{t_j\}_{j\in\N})$ is bounded from $\ell^2({\N_0}, v^{(\alpha,\beta)})$ into itself. Then, the operator $\OO(\{W_t^{(\alpha,\beta)}\}_{t>0},\{t_j\}_{j\in\N})$ is bounded from  ${\ell^2(\N_0)}$ into itself.

Suppose that $g$ is a complex-valued function defined in $(0,\infty)$. We defime
$$
\|g\|_{\OO(\{t_j\}_{j\in\N})}=\left(\sum_{{j=1}}^{\infty}\sup_{t_{j+1}\le \epsilon_{j+1}<\epsilon_j\le t_{j}}|g({\epsilon_j})-g(\epsilon_{j+1})|^2 \right)^{1/2}.
$$

By identifying each pair of functions $g_1$ and $g_2$ such that $g_1-g_2$ is a constant, $\|\cdot\|_{\OO(\{t_j\}_{j\in\N})}$ is a norm in th space $F_{\OO(\{t_j\}_{j\in\N})}      $ of all complex functions $g$ defined on $(0,\infty)$ such that $ \|g\|_{\OO(\{t_j\}_{j\in\N})}<\infty.$

Thus, $(F_{\OO(\{t_j\}_{j\in\N})}  ,\|\cdot\|_{\OO(\{t_j\}_{j\in\N})})$ is a Banach space.

If $g$ is a complex function which is differentiable in $(0,\infty)$, we have that
\begin{align*}
\|g\|_{\OO(\{t_j\}_{j\in\N})}&=\left(\sum_{{j=1}}^{\infty}\sup_{t_{j+1}\le \epsilon_{j+1}<\epsilon_j\le t_{j}}\left|\int^{\epsilon_j}_{\epsilon_{j+1}}g'(s)ds\right|^2 \right)^{1/2}\\
&\le \left(\sum_{{j=1}}^{\infty}\sup_{t_{j+1}\le \epsilon_{j+1}<\epsilon_j\le t_{j}}\left(\int^{\epsilon_j}_{\epsilon_{j+1}}\left|g'(s)\right|ds\right)^2 \right)^{1/2}\\
&\le \int_0^\infty \left|g'(s)\right|ds.
\end{align*}
From the established  estimates in subsection \ref{subsec2.1}, we deduce that
\begin{equation*}
\| {K}_t^{(\alpha,\beta)}(n,m)\|_{\OO(\{t_j\}_{j\in\N})}\le \frac{C}{|n-m|}, \quad n,m\in{\N_0}, \: n\neq m,
\end{equation*}
and
\begin{equation*}
\|{K}_t^{(\alpha,\beta)}(n,m)-{K}_t^{(\alpha,\beta)}(l,m) \|_{\OO(\{t_j\}_{j\in\N})}\le C\frac{|n-l|}{|n-m|^2}, \quad |n-m|>2|n-l|,\: \frac{m}{2}\le n,l\le \frac{3m}{2}.
\end{equation*}

By using \cite[Theorem 1.1]{BCFR}, we conclude that the oscillation operator \newline $\OO(\{\tilde{W}_t^{(\alpha,\beta)}\}_{t>0},\{t_j\}_{j\in\N})$  
can be extended from $\ell^p({\N_0},w)\cap {\ell^2(\N_0)}$ to $\ell^p({\N_0},w)$ as a bounded operator 
\begin{itemize}
	\item[(i)] from $\ell^p({\N_0},w)$ into itself, for every $1<p<\infty$ and $w\in A^p({\N_0})$, 
	\item[(ii)] from $\ell^1({\N_0},w)$ into $\ell^{1,\infty}({\N_0},w)$, for every  $w\in A^1({\N_0})$.
\end{itemize}
\edproof

\section{Proof of Theorem \ref{teo1.2}}

\subsection{The operators $S_{\{a_j\}_{j\in{\Z}},N}^{\{b_j\}_{j\in{\Z}}}(\{W_t^{(\alpha,\beta)}\}_{t>0})$}
\textcolor{white}{}\newline

In this section we shall prove the following result.
\begin{theorem}\label{teo3.1}
	Let $\alpha, \beta\ge -1/2$. Assume that $\{a_j\}_{j\in\Z}$ is a $\rho$-lacunary sequence in $(0,\infty)$ with $\rho>1$ and $\{b_j\}_{j\in\Z}$ is a bounded sequence of real numbers. For every $N=(N_1,N_2)\in\Z^2$, $N_1<N_2$, the operator $S_{\{a_j\}_{j\in{\Z}},N}^{\{b_j\}_{j\in{\Z}}}(\{W_t^{(\alpha,\beta)}\}_{t>0})$ is bounded from $\ell^p({\N_0},w)$ into itself, for every $1<p<\infty$ and $w\in A^p({\N_0})$,  and from $\ell^1({\N_0},w)$ into $\ell^{1,\infty}({\N_0},w)$, for every  $w\in A^1({\N_0})$. Furthermore, for every $1<p<\infty$ and $w\in A^p({\N_0})$, 
	$$
	\sup_{\substack{N=(N_1,N_2)\in\Z^2\\N_1<N_2}}\left\| S_{\{a_j\}_{j\in{\Z}},N}^{\{b_j\}_{j\in{\Z}}}(\{W_t^{(\alpha,\beta)}\}_{t>0})\right\|_{\ell^p({\N_0},w)\to \ell^p({\N_0},w)}<\infty,
	$$ and, for every $w\in A^1({\N_0})$, 
	$$
	\sup_{\substack{N=(N_1,N_2)\in\Z^2\\N_1<N_2}}\left\| S_{\{a_j\}_{j\in{\Z}},N}^{\{b_j\}_{j\in{\Z}}}(\{W_t^{(\alpha,\beta)}\}_{t>0})\right\|_{\ell^1({\N_0},w)\to \ell^{1,\infty}({\N_0},w)}<\infty.
	$$
\end{theorem}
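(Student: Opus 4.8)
The plan is to deduce Theorem~\ref{teo3.1} from a uniform $\ell^2$-bound together with the scalar Calder\'on-Zygmund machinery already used in Section~\ref{sec2}, the point being that every estimate comes out \emph{independent of} $N$. Put $M=\sup_{j\in\Z}|b_j|$ and denote by
$$
\mathcal{K}_N(n,m)=\sum_{j=N_1}^{N_2}b_j\big(K_{a_{j+1}}^{(\alpha,\beta)}(n,m)-K_{a_j}^{(\alpha,\beta)}(n,m)\big),\qquad n,m\in{\N_0},
$$
the kernel of $S_{\{a_j\}_{j\in{\Z}},N}^{\{b_j\}_{j\in{\Z}}}(\{W_t^{(\alpha,\beta)}\}_{t>0})$. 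First I would establish the $\ell^2$-boundedness. Since $\mathcal{F}^{(\alpha,\beta)}$ is an isometry carrying $W_t^{(\alpha,\beta)}$ into multiplication by $e^{-t(1-x)}$, on the Fourier side $S_{\{a_j\}_{j\in{\Z}},N}^{\{b_j\}_{j\in{\Z}}}(\{W_t^{(\alpha,\beta)}\}_{t>0})$ acts as multiplication by $m_N(1-x)$, where $m_N(u)=\sum_{j=N_1}^{N_2}b_j(e^{-a_{j+1}u}-e^{-a_j u})$. As $\{a_j\}_{j\in\Z}$ is increasing, the intervals $[a_j,a_{j+1}]$ are non-overlapping, so for $u>0$,
$$
|m_N(u)|\le M\sum_{j=N_1}^{N_2}\int_{a_j}^{a_{j+1}}u\,e^{-su}\,ds\le M\int_0^\infty u\,e^{-su}\,ds=M,
$$
whence $\|S_{\{a_j\}_{j\in{\Z}},N}^{\{b_j\}_{j\in{\Z}}}(\{W_t^{(\alpha,\beta)}\}_{t>0})\|_{\ell^2({\N_0})\to\ell^2({\N_0})}\le M$ for every $N$.

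The crux is that the Calder\'on-Zygmund kernel estimates for $\mathcal{K}_N$ follow at once from the work done in Subsection~\ref{subsec2.1}. Writing each difference over its interval, $K_{a_{j+1}}^{(\alpha,\beta)}(n,m)-K_{a_j}^{(\alpha,\beta)}(n,m)=\int_{a_j}^{a_{j+1}}\partial_t K_t^{(\alpha,\beta)}(n,m)\,dt$, bounding $|b_j|\le M$, and again using that the $[a_j,a_{j+1}]$ do not overlap, one collapses the sum into a single integral:
$$
|\mathcal{K}_N(n,m)|\le M\int_0^\infty\big|\partial_t K_t^{(\alpha,\beta)}(n,m)\big|\,dt,\qquad
|\mathcal{K}_N(n,m)-\mathcal{K}_N(l,m)|\le M\int_0^\infty\big|\partial_t\big(K_t^{(\alpha,\beta)}(n,m)-K_t^{(\alpha,\beta)}(l,m)\big)\big|\,dt.
$$
Now the right-hand sides are exactly the quantities bounded in Subsection~\ref{subsec2.1}: by \eqref{eq2.3} the $\rho$-norm is dominated by $\int_0^\infty|\partial_t(\cdot)|\,dt$, and the computations proving \eqref{eq2.1} and \eqref{eq2.2} in fact give $\int_0^\infty|\partial_t K_t^{(\alpha,\beta)}(n,m)|\,dt\le C/|n-m|$ for $n\neq m$ and $\int_0^\infty|\partial_t(K_t^{(\alpha,\beta)}(n,m)-K_t^{(\alpha,\beta)}(l,m))|\,dt\le C|n-l|/|n-m|^2$ on the range $|n-m|>2|n-l|$, $m/2\le n,l\le 3m/2$. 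Thus $\mathcal{K}_N$ is a Calder\'on-Zygmund kernel with size and smoothness constants independent of $N$.

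Having the uniform $\ell^2$-bound and the uniform kernel estimates, I would finish by invoking the scalar version of \cite[Theorem 2.1]{BCFR}, which yields that $S_{\{a_j\}_{j\in{\Z}},N}^{\{b_j\}_{j\in{\Z}}}(\{W_t^{(\alpha,\beta)}\}_{t>0})$ is bounded on $\ell^p({\N_0},w)$ for $1<p<\infty$, $w\in A_p({\N_0})$, and of weak type $(1,1)$ for $w\in A_1({\N_0})$, with norms depending only on $M$ and on the Calder\'on-Zygmund constants. Since these are independent of $N$, the suprema over $N$ are finite and Theorem~\ref{teo3.1} follows. I do not expect a serious obstacle at this stage: the whole difficulty was already absorbed into Section~\ref{sec2}, and the decisive new observation is merely that telescoping the differences across the non-overlapping intervals $[a_j,a_{j+1}]$ reduces the sum to the single integral $\int_0^\infty|\partial_t K_t^{(\alpha,\beta)}(n,m)|\,dt$ controlled there. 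It is worth stressing that this argument uses neither the lacunarity of $\{a_j\}_{j\in\Z}$ nor any cancellation among the signs of $b_j$; those hypotheses will be needed only later, when one upgrades the uniform bound to the maximal operator $S_{\{a_j\}_{j\in{\Z}},*}^{\{b_j\}_{j\in{\Z}}}$ appearing in Theorem~\ref{teo1.2}.
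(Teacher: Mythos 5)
Your proposal is correct and follows essentially the same route as the paper: a uniform $\ell^2$-bound via the $(\alpha,\beta)$-Fourier transform (your explicit multiplier computation with $m_N(u)$ is just an unpacking of the reference to \cite[Theorem 2.1]{RZ} that the paper invokes), then the telescoping over the non-overlapping intervals $[a_j,a_{j+1}]$ to dominate the kernel $\mathcal{Q}_N^{(\alpha,\beta)}$ by $\|b\|_{\ell^\infty(\Z)}\int_0^\infty|\partial_t K_t^{(\alpha,\beta)}(n,m)|\,dt$, which together with the estimates already proved for \eqref{eq2.1} and \eqref{eq2.2} gives $N$-independent Calder\'on--Zygmund size and smoothness bounds, and finally \cite[Theorem 2.1]{BCFR}. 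Your closing observation that only monotonicity of $\{a_j\}_{j\in\Z}$, not lacunarity, is used here is also consistent with the paper's proof, where lacunarity enters only for the maximal operator in Theorem \ref{teo1.2}.
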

\begin{proof}
	Let $N=(N_1,N_2)\in\Z^2$ with $N_1<N_2$. By proceeding as in the proof of \cite[Theorem 2.1, p. 627]{RZ} and by using the $(\alpha,\beta)$-Fourier transform we can see that
	$$
	\left\| S_{\{a_j\}_{j\in{\Z}},N}^{\{b_j\}_{j\in{\Z}}}(\{W_t^{(\alpha,\beta)}\}_{t>0})(f)\right\|_{{\ell^2(\N_0)}}\le C\|f\|_{{\ell^2(\N_0)}},\quad f\in{\ell^2(\N_0)},
	$$
	where $C>0$ does not depend on $N$.

	We have that, for every $f\in {\ell^2(\N_0)}$,
	$$
	S_{\{a_j\}_{j\in{\Z}},N}^{\{b_j\}_{j\in{\Z}}}(\{W_t^{(\alpha,\beta)}\}_{t>0})(f)(n)=\sum_{m\in{\N_0}}f(m)\mathcal{Q}_N^{(\alpha,\beta)}(n,m), \quad n\in{\N_0},
	$$
	where 
	$$
	\mathcal{Q}_N^{(\alpha,\beta)}(n,m)=\sum_{j=N_1}^{N_2}b_j(K_{a_{j+1}}^{(\alpha,\beta)}(n,m)-K_{a_{j}}^{(\alpha,\beta)}(n,m)), \quad n,m\in{\N_0}.
	$$
	According to \eqref{eq2.3}, we obtain
	$$
	|	\mathcal{Q}_N^{(\alpha,\beta)}(n,m)|\le \|b_j\|_{\ell^\infty(\Z)}\int_0^\infty|\partial_t K_t ^{(\alpha,\beta)}(n,m)|dt,\quad n,m\in{\N_0}.
	$$
	In the proof of \eqref{eq2.1} we established that
	$$
	\int_0^\infty|\partial_t K_t^{(\alpha,\beta)}(n,m)|dt\le\frac{C}{|n-m|}, \quad n,m\in{\N_0}, \: n\neq m.
	$$
	Then 
	\begin{equation}\label{eq3.A}
	|	\mathcal{Q}_N^{(\alpha,\beta)}(n,m)|\le\frac{C}{|n-m|}, \quad n,m\in{\N_0}, \: n\neq m,
	\end{equation}
	where $C>0$ does not depend on $N$.
	
	Also, by proceeding as in the proof of \eqref{eq2.2}, we can see that
	\begin{align}\label{eq3.B} |\mathcal{Q}_N^{(\alpha,\beta)}(n,m)-	\mathcal{Q}_N^{(\alpha,\beta)}(l,m) |\le C\frac{|n-l|}{|n-m|^2}, \quad |n-m|>2|n-l|,\: \frac{m}{2}\le n,l\le \frac{3m}{2},
\end{align}
being $C$ independent of $N$.

The proof can be finished by using \cite[Theorem 2.1]{BCFR}.
	\end{proof}
\vspace{0.4cm}

For every $N=(N_1,N_2)\in\Z^2$ with $N_1<N_2$, we define
$$
	S_{\{a_j\}_{j\in{\Z}},N,loc}^{\{b_j\}_{j\in{\Z}}}(\{W_t^{(\alpha,\beta)}\}_{t>0})(f)(n)= S_{\{a_j\}_{j\in{\Z}},N}^{\{b_j\}_{j\in{\Z}}}(\{W_t^{(\alpha,\beta)}\}_{t>0})(f\chi_{{\frac{n}{2}\le m\le \frac{3n}{2}}})(n),\quad n\in{\N_0},
$$
 and
 \begin{align*}
 S_{\{a_j\}_{j\in{\Z}},N,glob}^{\{b_j\}_{j\in{\Z}}}(\{W_t^{(\alpha,\beta)}\}_{t>0})(f)&= S_{\{a_j\}_{j\in{\Z}},N}^{\{b_j\}_{j\in{\Z}}}(\{W_t^{(\alpha,\beta)}\}_{t>0})(f)\\&\quad-
 S_{\{a_j\}_{j\in{\Z}},N,loc}^{\{b_j\}_{j\in{\Z}}}(\{W_t^{(\alpha,\beta)}\}_{t>0})(f).
 \end{align*}
\begin{corollary}
	Properties in Theorem \ref{teo3.1} hold for $S_{\{a_j\}_{j\in{\Z}},N,loc}^{\{b_j\}_{j\in{\Z}}}(\{W_t^{(\alpha,\beta)}\}_{t>0})$ and $S_{\{a_j\}_{j\in{\Z}},N,glob}^{\{b_j\}_{j\in{\Z}}}(\{W_t^{(\alpha,\beta)}\}_{t>0})$.
\end{corollary}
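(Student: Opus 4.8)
The plan is to exploit the splitting $S_N=S_N^{\mathrm{loc}}+S_N^{\mathrm{glob}}$, where for brevity I write $S_N$, $S_N^{\mathrm{loc}}$ and $S_N^{\mathrm{glob}}$ for the operators $S_{\{a_j\}_{j\in{\Z}},N}^{\{b_j\}_{j\in{\Z}}}(\{W_t^{(\alpha,\beta)}\}_{t>0})$, $S_{\{a_j\}_{j\in{\Z}},N,loc}^{\{b_j\}_{j\in{\Z}}}(\{W_t^{(\alpha,\beta)}\}_{t>0})$ and $S_{\{a_j\}_{j\in{\Z}},N,glob}^{\{b_j\}_{j\in{\Z}}}(\{W_t^{(\alpha,\beta)}\}_{t>0})$. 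Since Theorem \ref{teo3.1} already provides the desired bounds for $S_N$, uniformly in $N$, it suffices to prove the same bounds for $S_N^{\mathrm{glob}}$; the local part then inherits them from $S_N^{\mathrm{loc}}=S_N-S_N^{\mathrm{glob}}$. Note that $S_N^{\mathrm{glob}}$ has kernel $\mathcal{Q}_N^{(\alpha,\beta)}(n,m)$ supported in the region $G=\{(n,m)\in{\N_0}\times{\N_0}:\ m<n/2 \text{ or } m>3n/2\}$, so that no further Calder\'on--Zygmund argument will be needed for it.

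The first step is a geometric observation: on $G$ the indices $n$ and $m$ are separated in a multiplicative sense. Indeed, if $m<n/2$ then $|n-m|=n-m\ge n/2\ge(n+m)/3$, and if $m>3n/2$ then $|n-m|=m-n\ge m/3\ge(n+m)/5$; hence $|n-m|\ge c(n+m)$ on $G$. Combining this with the size estimate \eqref{eq3.A}, which holds with a constant independent of $N$, I obtain
\begin{equation*}
|\mathcal{Q}_N^{(\alpha,\beta)}(n,m)|\le\frac{C}{|n-m|}\le\frac{C}{n+m+1},\qquad (n,m)\in G .
\end{equation*}

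The second step turns this decay into a pointwise domination by discrete Hardy operators. Writing $G=G_1\cup G_2$ with $G_1=\{m<n/2\}$ and $G_2=\{m>3n/2\}$, and using $n+m+1\ge n+1$ on $G_1$ and $n+m+1\ge m+1$ on $G_2$, I get
\begin{equation*}
|S_N^{\mathrm{glob}}(f)(n)|\le C\Bigg(\frac{1}{n+1}\sum_{m=0}^{n}|f(m)|+\sum_{m=n}^{\infty}\frac{|f(m)|}{m+1}\Bigg)=:C\big(\mathscr{H}|f|(n)+\mathscr{H}^{*}|f|(n)\big),
\end{equation*}
again with $C$ independent of $N$, where $\mathscr{H}$ is the discrete Hardy averaging operator and $\mathscr{H}^{*}$ its formal adjoint.

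Finally, it remains to invoke the weighted boundedness of $\mathscr{H}$ and $\mathscr{H}^{*}$. For $\mathscr{H}$ this is immediate, since $\mathscr{H}|f|\le Mf$, where $M$ is the discrete Hardy--Littlewood maximal operator, whose boundedness on $\ell^p({\N_0},w)$ for $w\in A_p({\N_0})$, $1<p<\infty$, and from $\ell^1({\N_0},w)$ into $\ell^{1,\infty}({\N_0},w)$ for $w\in A_1({\N_0})$, is governed precisely by the $A_p({\N_0})$ condition of the Introduction. For $\mathscr{H}^{*}$ and $1<p<\infty$ I use duality together with the fact that $w^{1-p'}\in A_{p'}({\N_0})$ when $w\in A_p({\N_0})$, while the $A_1$ endpoint follows from the classical weighted Hardy inequality. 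I expect the only genuinely delicate point to be the verification that the paper's discrete $A_p({\N_0})$ condition is the correct Muckenhoupt condition for these one-sided (Hardy and dual Hardy) inequalities, in particular at the $A_1$ endpoint for $\mathscr{H}^{*}$; once this is in place, the uniform-in-$N$ bounds for $S_N^{\mathrm{glob}}$, and hence for $S_N^{\mathrm{loc}}$, follow at once.
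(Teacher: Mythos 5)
Your proposal is correct and follows essentially the same route as the paper: the paper also bounds the global part pointwise via the kernel estimate \eqref{eq3.A} by the discrete Hardy operator and its adjoint, invokes their weighted $\ell^p({\N_0},w)$ and weak $(1,1)$ bounds for $A_p({\N_0})$ weights, and then recovers the local part from the identity $S_{N,loc}=S_N-S_{N,glob}$ together with Theorem \ref{teo3.1}. You simply make explicit some details the paper leaves implicit, namely the separation $|n-m|\ge c(n+m)$ on the global region and the duality argument for the adjoint Hardy operator.
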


\begin{proof}
	Let $N\in\Z$. According to \eqref{eq3.A}, we have that 
	$$
	|S_{\{a_j\}_{j\in{\Z}},N,glob}^{\{b_j\}_{j\in{\Z}}}(\{W_t^{(\alpha,\beta)}\}_{t>0})(f)(n)|\le C\left(\frac{1}{n}\sum_{m=0}^{n-1}|f(m)|+\sum_{m=n+1}^\infty \frac{|f(m)|}{m}\right),\quad n\in{\N_0},
	$$
	where $C>0$ does not depend on $N$. The first term in the right hand side does not appear when $n=0$. By using $\ell^p$-boundedness properties of discrete Hardy operators we can deduce that the corresponding properties for $S_{\{a_j\}_{j\in{\Z}},N,glob}^{\{b_j\}_{j\in{\Z}}}(\{W_t^{(\alpha,\beta)}\}_{t>0})$. The proof can be finished by using Theorem \ref{teo3.1}.
\end{proof}

\subsection{Some auxiliary results}

In order to prove a Cotlar inequality for $T_*^{(\alpha,\beta)}$, we need the following results.
\begin{proposition}\label{prop3.1}
	Let  $\alpha, \beta\ge -1/2$. Then,
	$$
	\sup_{t>0}|\partial_tK_t^{(\alpha,\beta)}(n,m)|\le\frac{C}{|n-m|^3},\quad n,m\in{\N_0},\quad n\neq m.
	$$
\end{proposition}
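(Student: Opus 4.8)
The plan is to convert the $t$-derivative into a spatial second difference by means of the heat equation and then reuse the machinery already developed for \eqref{eq2.1} and \eqref{eq2.2}. Since $\{W_t^{(\alpha,\beta)}\}_{t>0}=\{e^{tJ^{(\alpha,\beta)}}\}_{t>0}$, applying $J^{(\alpha,\beta)}$ in the variable $n$ (legitimate because $J^{(\alpha,\beta)}$ is selfadjoint and $K_t^{(\alpha,\beta)}(n,m)=K_t^{(\alpha,\beta)}(m,n)$) gives, for $n\in\N$,
\[
\partial_t K_t^{(\alpha,\beta)}(n,m)=a_{n-1}^{(\alpha,\beta)}K_t^{(\alpha,\beta)}(n-1,m)+b_n^{(\alpha,\beta)}K_t^{(\alpha,\beta)}(n,m)+a_n^{(\alpha,\beta)}K_t^{(\alpha,\beta)}(n+1,m).
\]
A direct expansion of the coefficients shows $a_n^{(\alpha,\beta)}=\tfrac12+O(n^{-2})$ and $b_n^{(\alpha,\beta)}=-1+O(n^{-2})$, so the right-hand side equals
\[
a_n^{(\alpha,\beta)}\,\Delta_n^2 K_t^{(\alpha,\beta)}(n,m)+(a_{n-1}^{(\alpha,\beta)}-a_n^{(\alpha,\beta)})K_t^{(\alpha,\beta)}(n-1,m)+(b_n^{(\alpha,\beta)}+2a_n^{(\alpha,\beta)})K_t^{(\alpha,\beta)}(n,m),
\]
where $\Delta_n^2 K_t^{(\alpha,\beta)}(n,m)=K_t^{(\alpha,\beta)}(n-1,m)-2K_t^{(\alpha,\beta)}(n,m)+K_t^{(\alpha,\beta)}(n+1,m)$ and the two scalar coefficients are $O(n^{-2})$.

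Because $K_t^{(\alpha,\beta)}(n,m)\to0$ as $t\to\infty$, any function $g$ of $t$ built from these kernels satisfies $\sup_{t>0}|g(t)|\le\|g\|_\rho$ (compare $g(t)$ with its limit at $t=\infty$). Hence, in the local regime $m/2\le n\le 3m/2$, the two error terms are controlled by \eqref{eq2.1} as $O(n^{-2})\cdot C|n-m|^{-1}\le C|n-m|^{-3}$ (here $|n-m|\le n$), and the whole estimate reduces to the second-order smoothness bound
\[
\|\Delta_n^2 K_t^{(\alpha,\beta)}(n,m)\|_\rho\le\frac{C}{|n-m|^3},\qquad \frac{m}{2}\le n\le\frac{3m}{2},\ n\neq m.
\]

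I would prove this exactly as \eqref{eq2.2} was proved, but carrying the computation one order higher: the second difference in $n$ turns $P_n^{(\alpha,\beta)}$ into a combination of Jacobi polynomials whose parameters and degrees are shifted by two (in the way the single difference led to the shift $P_n^{(\alpha+1,\beta)}$ appearing in $D_t^{(\alpha,\beta)}$), which permits two further applications of the reduction identities of \cite[Lemma 5.1]{ACL1} on top of the one already used for \eqref{eq2.1}. Reducing everything through \eqref{eq2.3} to integrals $\int_0^\infty|\partial_t(\,\cdot\,)|\,dt$ and inserting the asymptotics \eqref{eq2.4}, \eqref{eq2.6}, \eqref{eq2.7} and \eqref{eq2.8} after the change $x=\cos\theta$, the goal becomes a bound of the form $\int_0^\infty|\partial_t(\,\cdot\,)|\,dt\le C(\sqrt{nm}\,|n-m|^3)^{-1}$; this has the correct homogeneity, since by the heat equation $\partial_t\Delta_n^2 K_t^{(\alpha,\beta)}$ is comparable to $\partial_t^2 K_t^{(\alpha,\beta)}$, and $\int_0^\infty|\partial_t^2 K_t^{(\alpha,\beta)}|\,dt$ scales like $|n-m|^{-3}$, the same scaling as $\sup_t|\partial_t K_t^{(\alpha,\beta)}|$.

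The cases outside the local regime are easier and handled separately: the indices $n=0$ or $m=0$ through the explicit representation of $K_t^{(\alpha,\beta)}(0,m)$ in \cite[Lemma 5.1]{ACL1}, and the ``far'' regime $|n-m|\sim\max(n,m)$ directly from the representation, where the decay is automatic. The main obstacle I anticipate is the extraction of all three powers of $|n-m|^{-1}$ from the oscillatory integrals produced by \eqref{eq2.6}--\eqref{eq2.8}: one integrates by parts in $\theta$ three times against $\cos((n-m)\theta)$ and $\sin((n-m)\theta)$, as was done once for $R_t(n,m)$ in the proof of \eqref{eq2.10}, but a naive termwise estimate of the outcome, in which the powers of $t$ created by differentiating $e^{-t(1-\cos\theta)}$ are bounded separately and then integrated in $t$, loses a factor $\log|n-m|$ (the one integration by parts needed for \eqref{eq2.1} does not see this difficulty). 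The range of $\theta$ must therefore be split at the scale $|n-m|^{-1}$, equivalently $t^{-1/2}$, and the contributions of the two ranges balanced carefully; this is the delicate point of the argument.
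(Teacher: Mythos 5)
Your reduction via the heat equation is sound as far as it goes (the identity $\partial_t K_t^{(\alpha,\beta)}(n,m)=a_{n-1}^{(\alpha,\beta)}K_t^{(\alpha,\beta)}(n-1,m)+b_n^{(\alpha,\beta)}K_t^{(\alpha,\beta)}(n,m)+a_n^{(\alpha,\beta)}K_t^{(\alpha,\beta)}(n+1,m)$ is correct, as is $\sup_{t>0}|g(t)|\le\|g\|_\rho$ for kernels vanishing at $t=\infty$), but there are two genuine gaps. First, the regime away from the diagonal is \emph{not} automatic, and your proposal supplies no mechanism for it: the crude bounds coming from \eqref{eq2.4} give $|\partial_tK_t^{(\alpha,\beta)}(n,m)|\le C(t^{-3/2}+e^{-t})$ with no decay in $|n-m|$ whatsoever, \eqref{eq2.1} only yields first-order decay $|n-m|^{-1}$, and the smallness $O(n^{-2})$ of the coefficients $a_{n-1}^{(\alpha,\beta)}-a_n^{(\alpha,\beta)}$ and $b_n^{(\alpha,\beta)}+2a_n^{(\alpha,\beta)}$ is useless when $n$ is small and $m$ large (take $n=1$, $m$ large: your decomposition bounds $\partial_tK_t^{(\alpha,\beta)}(1,m)$ by kernel values that \eqref{eq2.1} controls only by $C/m$, three powers short of the claim). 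Second, even in the local window, the plan to extract $|n-m|^{-3}$ by three integrations by parts founders on the non-oscillatory remainders in the asymptotics \eqref{eq2.6} and \eqref{eq2.8}: the terms $F_l^{(\alpha,\beta)}$ and $g_\alpha$ are only \emph{estimated}, not given in closed oscillatory form, so they cannot be integrated by parts; termwise they produce bounds of size $(nm)^{-3/2}$ after the $t$-integration, and $(nm)^{-3/2}\not\le C(\sqrt{nm}\,|n-m|^3)^{-1}$ when $|n-m|\sim n$, i.e., near the edge of the window $\frac{m}{2}\le n\le\frac{3m}{2}$. The logarithmic loss you flag and leave unresolved is a symptom of the same problem; the one power of $|n-m|^{-1}$ obtained for \eqref{eq2.1} is exactly what this oscillatory machinery can deliver, and your target norm $\|\Delta_n^2K_t^{(\alpha,\beta)}(\cdot,m)\|_\rho\le C|n-m|^{-3}$ is in fact a strictly harder statement than Proposition \ref{prop3.1} itself.

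The paper's proof is structurally different and avoids all of this: it uses no variation norms, no Bessel asymptotics and no integration by parts. It iterates the algebraic recurrence of \cite[Lemma 5.1 (a)]{ACL1} three times, writing $I_t^{(\alpha,\beta,\alpha,\beta,\alpha,\beta)}(n,m)$ as $t^3\sum_{j\in J_1}+t^2\sum_{j\in J_2}+t\sum_{j\in J_3}$ of integrals of the same type, where each application of the recurrence extracts one factor comparable to $|n-m|^{-1}$ (so $|c_{ji}(n,m)|\le C|n-m|^{-3}$ for \emph{all} $n\neq m$, near-diagonal or not) while simultaneously raising the weight exponents to $\eta_{j1}=\alpha+3$, $\gamma_{j1}=\beta+3$, etc. The raised exponents absorb the powers of $t$ pointwise: after differentiating in $t$, elementary bounds such as $t^2\int_{-1}^1e^{-t(1-x)}(1-x)(1+x)\,dx\le C$ uniformly in $t$ give $\sup_{t>0}|\partial_tI_t^{(\alpha,\beta,\alpha,\beta,\alpha,\beta)}(n,m)|\le C|n-m|^{-3}$ directly, with small indices handled by parts (b) and (c) of the same lemma. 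If you wish to rescue your route, you would still need precisely this iterated-recurrence mechanism to cover the far regime and the remainder terms, at which point the heat-equation detour and the second-difference smoothness estimate buy you nothing.
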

\begin{proof}
	We will use \cite[Lemma 5.1]{ACL1} several times. Let $n,m\in\N$, $n,m\ge 3,$ $n\neq m$. According to \cite[Lemma 5.1 (a)]{ACL1}, we get
	\begin{enumerate}
		\item[(i)]\begin{small}\begin{align*}I_t^{(\alpha,\beta,\alpha,\beta,\alpha,\beta)}&(n,m)=\frac{(n+\alpha+\beta+1)(m+\alpha+\beta+1)}{2(n-m)(n+m+\alpha+\beta+1)}\;t\Bigg( \frac{1}{m+\alpha+\beta+1}\cdot\\
			&I_t^{(\alpha+1,\beta+1,\alpha,\beta,\alpha+1,\beta+1)}{(n-1,m)}-\frac{1}{n+\alpha+\beta+1}I_t^{(\alpha,\beta,\alpha+1,\beta+1,\alpha+1,\beta+1)}{(n,m-1)}\Bigg).\end{align*}\end{small}
			\item[(ii)]\begin{small}\begin{align*}I_t^{(\alpha+1,\beta+1,\alpha,\beta,\alpha+1,\beta+1)}&(n-1,m)=\frac{(n+\alpha+\beta+2)(m+\alpha+\beta+1)}{2((n-m)(n+m+\alpha+\beta+1)-(\alpha+\beta+2))}\\
				&\Bigg( \frac{t}{m+\alpha+\beta+1}
			I_t^{(\alpha+2,\beta+2,\alpha,\beta,\alpha+2,\beta+2)}{(n-2,m)}\\&-\frac{t}{n+\alpha+\beta+2}I_t^{(\alpha+1,\beta+1,\alpha+1,\beta+1,\alpha+2,\beta+2)}{(n-1,m-1)}\\
			&	+\frac{1}{{n}+\alpha+\beta+2}
			I_t^{(\alpha+1,\beta+1,\alpha+1,\beta+1,\alpha+1,\beta+2)}{(n-1,m-1)}\\
			&{-}\frac{1}{n+\alpha+\beta+2}I_t^{(\alpha+1,\beta+1,\alpha+1,\beta+1,\alpha+2,\beta+1)}{(n-1,m-1)}\Bigg).\end{align*}\end{small}
			\item[(iii)]\begin{small}\begin{align*}I_t^{(\alpha,\beta,\alpha+1,\beta+1,\alpha+1,\beta+1)}&(n,m-1)=\frac{(n+\alpha+\beta+1)(m+\alpha+\beta+{2})}{2((n-m)(n+m+\alpha+\beta+1)+(\alpha+\beta+2))}\\
			&\Bigg( \frac{t}{m+\alpha+\beta+2}
			I_t^{(\alpha+1,\beta+1,\alpha+1,\beta+1,\alpha+2,\beta+2)}{(n-1,m-1)}\\&-\frac{1}{m+\alpha+\beta+2}I_t^{(\alpha+1,\beta+1,\alpha+1,\beta+1,\alpha+1,\beta+2)}{(n-1,m-1)}\\
			&	+\frac{1}{{m}+\alpha+\beta+2}
			I_t^{(\alpha+1,\beta+1,\alpha+1,\beta+1,\alpha+2,\beta+1)}{(n-1,m-1)}\\&-\frac{t}{n+\alpha+\beta+1}I_t^{(\alpha,\beta,\alpha+2,\beta+2,\alpha+2,\beta+2)}{(n,m-2)}\Bigg).\end{align*}\end{small}
		\end{enumerate}
	We apply again \cite[Lemma 5.1 (a)]{ACL1} to each of the four terms in the right hand side in (ii) and (iii). We obtain that
	\begin{align*}
	I_t^{(\alpha,\beta,\alpha,\beta,\alpha,\beta)}&(n,m)=t^3\sum_{j\in J_1}c_{j1}(n,m)I_t^{(a_{j1},b_{j1},A_{j1},B_{j1},\eta_{j1},\gamma_{j1})}(l_{j1},k_{j1})\\
	&+t^2\sum_{j\in J_2}c_{j2}(n,m)I_t^{(a_{j2},b_{j2},A_{j2},B_{j2},\eta_{j2},\gamma_{j2})}(l_{j2},k_{j2})\\
		&+t\sum_{j\in J_3}c_{j3}(n,m)I_t^{(a_{j3},b_{j3},A_{j3},B_{j3},\eta_{j3},\gamma_{j3})}(l_{j3},k_{j3}),\quad t>0.
	\end{align*}
	Here, $J_1=J_3=\{n\in\N:\: 1\le n\le 8\}$ and $J_2=\{n\in\N:\: 1\le n\le 20\}$, being
	\begin{itemize}
		\item $|c_{ji}(n,m)|\le \frac{C}{|n-m|^3}$, $j\in J_i$, $i=1,2,3$.
		\item $(l_{ji},k_{ji})\in\{(l,k):\: l,k\in{\N_0},\: n-3\le l\le n,\: m-3\le k\le m\}$, $j\in J_i$, $i=1,2,3.$
		\item $\eta_{j1}=\alpha+3$, $\gamma_{j1}=\beta+3$, $j\in J_1$.
		\item $\eta_{j3}=\alpha+2$, $\gamma_{j3}=\beta+2$, $j\in J_3$.
		\item $(\eta_{j2},\gamma_{j2})\in\{(\alpha+2,\beta+3),(\alpha+3,\beta+2)\}$, $j\in J_2$.
		\item $a_{ji}+A_{ji}=2\alpha+3$, $b_{ji}+B_{ji}=2\beta+3$, $j\in J_i$, $i=1,2,3$.
	\end{itemize}
According to \eqref{eq2.4}, we obtain
\begin{align*}
|\partial_t	&I_t^{(\alpha,\beta,\alpha,\beta,\alpha,\beta)}(n,m)|\le \frac{C}{|n-m|^3}\Bigg(t^2\int_{-1}^1e^{-t(1-x)}(1-x)(1+x)dx\\
&\quad+t\int_{-1}^1e^{-t(1-x)}(1+x)dx+t\int_{-1}^1e^{-t(1-x)}(1-x)dx+\int_{-1}^1e^{-t(1-x)}dx\Bigg)\\
&\le \frac{C}{|n-m|^3}\Bigg(\int_0^{2t}e^{-u}u\;du+\int_0^{2t}e^{-u}du+\frac{1}{t}\int_0^{2t}e^{-u}u\;du+\frac{1}{t}\int_0^{2t}e^{-u}\;du\Bigg)\\
&\le\frac{C}{|n-m|^3},\quad t>0.
\end{align*}
When $n,m\in{\N_0}$, $n<3$ or $m<3$, we can proceed in a similar way by using \cite[Lemma 5.1 (a),(b) and (c)]{ACL1}.

\end{proof}

We say that a positive sequence is $(\lambda, \lambda^2)$-lacunary  with $\lambda>1$ when $\lambda\le \frac{a_{j+1}}{a_j}\le \lambda^2$, $j\in\Z$.

\begin{proposition}\label{prop3.2}
	Suppose that $\{a_j\}_{j\in\Z}$ is a $(\lambda, \lambda^2)$-lacunary sequence and $\{v_j\}_{j\in \Z}$ is a bounded complex sequence. Then,
	\begin{itemize}
		\item[(i)] $\displaystyle\left|\sum_{j=k}^{M}v_j(K_{a_{j+1}}^{(\alpha,\beta)}(n,m)-K_{a_{j}}^{(\alpha,\beta)}(n,m)) \right|\le \frac{C}{\sqrt{{a_k}}}, \quad k,M\in\Z, \; k<M\;\: n,m\in{\N_0},$
		\item[(ii)]$\displaystyle\left|\sum_{j=-M}^{l-1}v_j(K_{a_{j+1}}^{(\alpha,\beta)}(n,m)-K_{a_{j}}^{(\alpha,\beta)}(n,m)) \right|\le \frac{C}{\sqrt{{a_k}}}\lambda^{-(k-{l}+1)},$ when  $k,M,l\in\Z,\;$ \newline  $k>l>-M$, $C>0$ and $n,m\in{\N_0},$ $|n-m|\ge C\sqrt{a_k}$.
	\end{itemize}
\end{proposition}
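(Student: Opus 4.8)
The plan is to reduce both estimates to pointwise bounds on $\partial_t K_t^{(\alpha,\beta)}(n,m)$, by exploiting that each summand is the integral of the $t$-derivative of the kernel over a dyadic block. Writing $K_{a_{j+1}}^{(\alpha,\beta)}(n,m)-K_{a_j}^{(\alpha,\beta)}(n,m)=\int_{a_j}^{a_{j+1}}\partial_t K_t^{(\alpha,\beta)}(n,m)\,dt$ and using that the intervals $[a_j,a_{j+1}]$ are consecutive and pairwise disjoint, both sums are dominated by $\|\{v_j\}_{j\in\Z}\|_{\ell^\infty(\Z)}\int_I|\partial_t K_t^{(\alpha,\beta)}(n,m)|\,dt$, with $I=(a_k,a_{M+1})\subset(a_k,\infty)$ in case (i) and $I=(a_{-M},a_l)\subset(0,a_l)$ in case (ii). Everything then comes down to estimating $\int_I|\partial_t K_t^{(\alpha,\beta)}(n,m)|\,dt$, and the two cases call for two different bounds on the derivative.

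For (i) the key is a diagonal estimate that is \emph{uniform} in $n,m$. Differentiating under the integral sign in \eqref{eq2} gives $|\partial_t K_t^{(\alpha,\beta)}(n,m)|\le\int_{-1}^1(1-x)e^{-t(1-x)}|p_n^{(\alpha,\beta)}(x)||p_m^{(\alpha,\beta)}(x)|\,d\mu_{\alpha,\beta}(x)$. Inserting the uniform pointwise bound $|p_k^{(\alpha,\beta)}(x)|\le C(1-x)^{-\alpha/2-1/4}(1+x)^{-\beta/2-1/4}$, which follows from \eqref{eq2.4} together with $w_k^{(\alpha,\beta)}\sim\sqrt k$, and substituting $s=1-x$, the powers of $1-x$ and $1+x$ collapse and one is left with a single scalar integral of the form $\int_0^2 s^{1/2}e^{-ts}(2-s)^{-1/2}\,ds$. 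Splitting it at $s=1$ (controlling $(2-s)^{-1/2}$ on $(0,1)$ and $e^{-ts}$ by $e^{-t}$ on $(1,2)$) yields $|\partial_t K_t^{(\alpha,\beta)}(n,m)|\le C(t^{-3/2}+e^{-t})$ for all $n,m\in\N_0$; the cases $n=0$ or $m=0$ are handled identically, with even faster decay, since $p_0^{(\alpha,\beta)}$ is constant. Then $\int_{a_k}^\infty(t^{-3/2}+e^{-t})\,dt\le 2a_k^{-1/2}+e^{-a_k}\le C a_k^{-1/2}$, using $\sqrt a\,e^{-a}\le 1$, which proves (i).

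For (ii) I would instead use the off-diagonal bound already available, namely Proposition \ref{prop3.1}, which gives $\sup_{t>0}|\partial_t K_t^{(\alpha,\beta)}(n,m)|\le C|n-m|^{-3}$ for $n\neq m$. The hypothesis $|n-m|\ge C\sqrt{a_k}>0$ forces $n\neq m$, so the proposition applies, and hence $\int_0^{a_l}|\partial_t K_t^{(\alpha,\beta)}(n,m)|\,dt\le C a_l|n-m|^{-3}\le C a_l a_k^{-3/2}$. The lower lacunarity ratio then gives $a_l/a_k\le\lambda^{-(k-l)}$, so that $a_l a_k^{-3/2}=a_k^{-1/2}(a_l/a_k)\le a_k^{-1/2}\lambda^{-(k-l)}=\lambda\,a_k^{-1/2}\lambda^{-(k-l+1)}$, which is exactly the asserted bound once $\lambda$ is absorbed into the constant.

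The only genuinely new ingredient is the uniform diagonal decay $|\partial_t K_t^{(\alpha,\beta)}(n,m)|\le C(t^{-3/2}+e^{-t})$ needed for (i); I expect this to be the main, though modest, obstacle, since the argument for (ii) is a direct consequence of Proposition \ref{prop3.1} and the integral representation of the differences. The two points requiring care are that the constant in (i) must be genuinely independent of $n,m$ (which is ensured by the uniformity in \eqref{eq2.4}), and that in (ii) the constant may legitimately depend on $\lambda$, coming from the step $a_l/a_k\le\lambda^{-(k-l)}$.
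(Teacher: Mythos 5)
Your proposal is correct and takes essentially the same approach as the paper: both reduce the two sums to estimates on $\partial_t K_t^{(\alpha,\beta)}(n,m)$, namely the uniform bound $|\partial_t K_t^{(\alpha,\beta)}(n,m)|\le C\,t^{-3/2}$ for part (i) (which the paper derives exactly as you do, from \eqref{eq2} together with \eqref{eq2.4} and $w_k^{(\alpha,\beta)}\sim\sqrt{k+1}$) and Proposition \ref{prop3.1} for part (ii). The only difference is bookkeeping: the paper applies the mean value theorem on each block $[a_j,a_{j+1}]$ and then sums a geometric series using lacunarity, whereas you telescope $\int_{a_j}^{a_{j+1}}|\partial_t K_t^{(\alpha,\beta)}(n,m)|\,dt$ into a single integral, which is slightly cleaner and, in part (i), shows the lacunarity hypothesis is not even needed there.
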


\begin{proof}
	$(i)$ Let $j\in\Z$. By using the mean value theorem, we obtain
	$$
	K_{a_{j+1}}^{(\alpha,\beta)}(n,m)-K_{a_{j}}^{(\alpha,\beta)}(n,m)=(a_{j+1}-a_{j})\partial_t 	K_{t}^{(\alpha,\beta)}(n,m)|_{t=c_j},
	$$
	for a certain $c_j\in (a_j, a_{j+1})$. According to \eqref{eq2}, since $w_k^{(\alpha,\beta)}\sim\sqrt{k+1}$, $k\in{\N_0}$, we get
	\begin{align*}
	|\partial_t 	K_{t}^{(\alpha,\beta)}(n,m)|&\le C \int_{-1}^1 e^{-t(1-x)}\sqrt{\frac{1-x}{1+x}}dx\\&\le C \left(e^{-t}+\int_0^1 e^{-tz}\sqrt{z}dz \right)\le C (e^{-t}+t^{-3/2})\le\frac{C}{t^{3/2}}, \quad n,m\in{\N_0}  \text{ and } t>0.
	\end{align*}
	Then,
	\begin{align*}
	|K_{a_{j+1}}^{(\alpha,\beta)}(n,m)-K_{a_{j}}^{(\alpha,\beta)}(n,m)|&\le C \frac{|a_{j+1}-a_{j}|}{a_{j}^{3/2}}\\
	&\le C\frac{\lambda^2-1}{\sqrt{a_j}}, \quad n,m\in{\N_0}.
	\end{align*}
	It follows that, for every $k,M\in\Z$, $k<M$, $n,m\in{\N_0}$,
	\begin{align*}\left|\sum_{j=k}^{M}v_j(K_{a_{j+1}}^{(\alpha,\beta)}(n,m)-K_{a_{j}}^{(\alpha,\beta)}(n,m)) \right|&\le C\sum_{j=k}^M \frac{1}{\sqrt{{a_k}}}\le \frac{C}{\sqrt{{a_k}}}\sum_{j=k}^M \sqrt{\frac{a_k}{{{a_j}}}}\\&\le \frac{C}{\sqrt{{a_k}}}.	\end{align*}
		$(ii)$ Let $j\in\Z$. By using Proposition \ref{prop3.1} and again the mean value theorem, we obtain
			\begin{align*}
		|K_{a_{j+1}}^{(\alpha,\beta)}(n,m)-K_{a_{j}}^{(\alpha,\beta)}(n,m)|&\le C \frac{|a_{j+1}-a_{j}|}{|n-m|^{3}}\le C\frac{a_{j}}{|n-m|^{3}}, \quad n,m\in{\N_0}.
		\end{align*}
		Then, 
		\begin{align*}\left|\sum_{j=-M}^{l-1}v_j(K_{a_{j+1}}^{(\alpha,\beta)}(n,m)-K_{a_{j}}^{(\alpha,\beta)}(n,m)) \right|&\le C\sum_{j=-M}^{l-1} \frac{a_{j}}{|n-m|^{3}}\le C\sum_{j=-M}^{l-1} \frac{a_{j}}{a_{k}^{3/2}}\\&\le \frac{C}{\sqrt{{a_k}}}\lambda^{-(k-{l}+1)},	\end{align*}
		provided that $k,M,l\in\Z$, $k\ge l>-M$, $n,m\in{\N_0}$, $|n-m|>C\sqrt{a_k}$, with $C>0$.
	\end{proof}
\vspace{0.3cm}

		By $\mathcal{M}$ we denote the centered Hardy-Littlewood maximal function, given by
		$$
		\mathcal{M}(f)(n)=\sup_{r>0}\frac{1}{\mu_d(B_{{\N_0}}(n,r))}\sum_{m\in B_{{\N_0}}(n,r)}|f(m)|,\quad n\in{\N_0}.
		$$
		Here, $B_{{\N_0}}(n,r)=\{m\in{\N_0}:\: |m-n|<r\}$, $n\in{\N_0}$ and $r>0$. For every $1<q<\infty$ we consider $\mathcal{M}_q$, defined by
		$$
		\mathcal{M}_q(f)=\left( 	\mathcal{M}(|f|^q)\right)^{1/q}.
		$$
		We now prove a Cotlar type inequality for the local maximal operator
	
		\begin{align*}
		S_{\{a_j\}_{j\in\Z},*,M,loc}^{\{b_j\}_{j\in\Z}}&(\{W_t^{(\alpha,\beta)}\}_{t>0})(f)(n)\\&=\sup_{\substack{N=(N_1,N_2)\\
				N_1,N_2\in\Z,\: -M\le N_1<N_2\le M}}|S_{\{a_j\}_{j\in\Z},N}^{\{b_j\}_{j\in{\Z}}}(\{W_t^{(\alpha,\beta)}\}_{t>0})(f\chi_{{\frac{n}{2}\le m\le \frac{3n}{2}}})(n)|
					\end{align*}
		for every $M\in\N$.
	
		\begin{proposition}\label{prop3.3}
			Suppose that $\{a_j\}_{j\in\Z}$ is a $(\lambda, \lambda^2)$-lacunary sequence $\{v_j\}_{j\in \Z}$ is a bounded complex sequence  and $1<q<\infty$. Then, there exists $C>0$ such that, for every $M\in\N$, 
			\begin{align*}
				S_{\{a_j\}_{j\in\Z},*,M,loc}^{\{b_j\}_{j\in\Z}}(\{W_t^{(\alpha,\beta)}\}_{t>0})(f)(n)&\le C\Bigg( 	\mathcal{M}(	S_{\{a_j\}_{j\in\Z},(-M,M),loc}^{\{b_j\}_{j\in\Z}}(\{W_t^{(\alpha,\beta)}\}_{t>0})(f))\\&\qquad+	\mathcal{M}_q(f)\Bigg).
				\end{align*}
		\end{proposition}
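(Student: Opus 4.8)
Throughout, abbreviate $S_N^{\mathrm{loc}}:=S_{\{a_j\}_{j\in\Z},N,loc}^{\{b_j\}_{j\in\Z}}(\{W_t^{(\alpha,\beta)}\}_{t>0})$ and let $\mathcal{Q}_N^{(\alpha,\beta)}$ denote its kernel. The plan is to reduce the two–sided maximal operator to a one–sided one and then run a Calder\'on--Zygmund--Cotlar argument. The starting point is the exact local splitting
$$S_{(N_1,N_2)}^{\mathrm{loc}}(f)(n)=S_{(N_1,M)}^{\mathrm{loc}}(f)(n)-S_{(N_2+1,M)}^{\mathrm{loc}}(f)(n),\qquad n\in\N_0,$$
which holds because, for a fixed evaluation point $n$, the cut-off $\chi_{n/2\le m\le 3n/2}$ is identical in all three terms. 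Hence $S_{\{a_j\}_{j\in\Z},*,M,loc}^{\{b_j\}_{j\in\Z}}(\{W_t^{(\alpha,\beta)}\}_{t>0})(f)(n)\le 2\sup_{-M\le k\le M+1}|S_{(k,M)}^{\mathrm{loc}}(f)(n)|$, and it suffices to bound the one–sided maximal operator $\sup_k|S_{(k,M)}^{\mathrm{loc}}(f)(n)|$. I regard this as the decisive step, and the one I expect to be the main obstacle to discover: a direct attack on the two–sided truncation leaves a large-time tail $S_{(N_2+1,M)}^{\mathrm{loc}}$ whose kernel decays only like $1/|n-m|$ on the far region and produces a logarithmic loss the maximal functions cannot absorb; freezing the upper endpoint at $M$ cancels this tail and leaves only a small-time tail, precisely the object controlled by Proposition \ref{prop3.2}(ii).

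Fix $k$ and $n$ and set $r=\sqrt{a_k}$; I may assume $r\le n/4$, since otherwise $f\chi_{n/2\le m\le 3n/2}$ is supported in $B_{\N_0}(n,r)$ and only the near part below survives. Decompose $f=f_1+f_2$ with $f_1=f\chi_{B_{\N_0}(n,r)}$. For the near part, Proposition \ref{prop3.2}(i) gives $|\mathcal{Q}_{(k,M)}^{(\alpha,\beta)}(n,m)|\le C/\sqrt{a_k}=C/r$ for all $m$, whence
$$|S_{(k,M)}^{\mathrm{loc}}(f_1)(n)|\le \frac{C}{r}\sum_{m\in B_{\N_0}(n,r)}|f(m)|\le C\mathcal{M}(f)(n)\le C\mathcal{M}_q(f)(n).$$
For the far part I first eliminate the centre $n$: the smoothness estimate \eqref{eq3.B}, valid verbatim for $\mathcal{Q}_{(k,M)}^{(\alpha,\beta)}$ since $(k,M)$ is an admissible pair, gives for $z\in B_{\N_0}(n,r/2)$
$$|S_{(k,M)}^{\mathrm{loc}}(f_2)(n)-S_{(k,M)}^{\mathrm{loc}}(f_2)(z)|\le Cr\sum_{|m-n|\ge r}\frac{|f(m)|}{|n-m|^2}\le C\mathcal{M}(f)(n),$$
after summing over the dyadic annuli $|m-n|\sim 2^ir$.

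It remains to estimate $S_{(k,M)}^{\mathrm{loc}}(f_2)(z)$, which I expand against the full operator $T:=S_{(-M,M)}^{\mathrm{loc}}$ as
$$S_{(k,M)}^{\mathrm{loc}}(f_2)(z)=T(f)(z)-T(f_1)(z)-S_{(-M,k-1)}^{\mathrm{loc}}(f_2)(z),$$
and then average over $z\in B:=B_{\N_0}(n,r/2)$, noting $\mu_d(B)\sim r$. Averaging $|T(f)(z)|$ gives at most $\mathcal{M}(Tf)(n)$. For $T(f_1)$ I use Jensen's inequality together with the uniform-in-$M$ $\ell^q$-boundedness of $T$ provided by Theorem \ref{teo3.1}:
$$\frac{1}{\mu_d(B)}\sum_{z\in B}|T(f_1)(z)|\le \Big(\frac{1}{\mu_d(B)}\|Tf_1\|_{\ell^q(\N_0)}^q\Big)^{1/q}\le \frac{C}{\mu_d(B)^{1/q}}\|f_1\|_{\ell^q(\N_0)}\le C\mathcal{M}_q(f)(n).$$
For the small-time tail $S_{(-M,k-1)}^{\mathrm{loc}}(f_2)(z)$ I invoke Proposition \ref{prop3.2}(ii): on the annulus $|m-z|\sim 2^ir$ I choose $k_i$ with $\sqrt{a_{k_i}}\sim 2^ir$ (possible by $(\lambda,\lambda^2)$-lacunarity), so that Proposition \ref{prop3.2}(ii) with $l=k$ bounds the inner sum by $C\,4^{-i}/r$; multiplying by $\sum_{|m-z|\sim 2^ir}|f(m)|\le C2^ir\,\mathcal{M}(f)(n)$ and summing the geometric series in $i$ yields $C\mathcal{M}(f)(n)$.

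Collecting these estimates gives $|S_{(k,M)}^{\mathrm{loc}}(f)(n)|\le C(\mathcal{M}(Tf)(n)+\mathcal{M}_q(f)(n))$ with $C$ independent of $k$ and $M$; taking the supremum over $k$ and using the reduction of the first paragraph finishes the proof. Besides the one–sided reduction, the only delicate bookkeeping I foresee is that the local cut-offs centred at $n$ and at $z$ do not coincide; since $|n-z|\le r/2$ their symmetric difference lies in two balls of radius $r$ at distance $\sim n$ from $n$, where $|\mathcal{Q}_{(k,M)}^{(\alpha,\beta)}(n,m)|\le C/n$, so the corresponding terms are again dominated by $C\mathcal{M}(f)(n)$ and do not affect the conclusion.
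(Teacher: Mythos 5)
Your proposal is correct and follows essentially the same route as the paper's proof: the same one--sided reduction $S_{(N_1,N_2)}=S_{(N_1,M)}-S_{(N_2+1,M)}$, the same splitting $f=f_1+f_2$ at scale $\sqrt{a_k}$, and the same four--term comparison with $S_{(-M,M),loc}$ obtained by averaging over a ball, with Proposition \ref{prop3.2}(i) for the near part, \eqref{eq3.A}--\eqref{eq3.B} for the difference of centres, the uniform $\ell^q$--bound of Theorem \ref{teo3.1} for the $T(f_1)$ term, and Proposition \ref{prop3.2}(ii) for the small--time tail, exactly as in the paper's adaptation of \cite[Theorem 3.11]{TZ2}. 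Your explicit treatment of the mismatch between the cut-offs $\chi_{n/2\le m\le 3n/2}$ and $\chi_{z/2\le m\le 3z/2}$ is a detail the paper delegates to the argument of \cite[(18)]{BCFR}, but it is the same estimate.
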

\begin{proof}
	In order to prove this property we can proceed adapting to our context the proof of \cite[Theorem 3.11]{TZ2}. The properties that we need have been established in Proposition \ref{prop3.2}, {\eqref{eq3.A},\eqref{eq3.B}} and {Theorem \ref{teo3.1}.} We now sketch the proof.
	
	Let $M\in\N$. For every $N=(N_1,N_2)$ with $-M<N_1<N_2<M$, we can write

	\begin{align*}
	S_{\{a_j\}_{j\in{\Z}},N}^{\{b_j\}_{j\in{\Z}}}&(\{W_t^{(\alpha,\beta)}\}_{t>0})(f\chi_{\frac{n}{2}\le m\le \frac{3n}{2}})(n)\\
	&=	S_{\{a_j\}_{j\in{\Z}},(N_1,M)}^{\{b_j\}_{j\in{\Z}}}(\{W_t^{(\alpha,\beta)}\}_{t>0})(f\chi_{\frac{n}{2}\le m\le \frac{3n}{2}})(n)\\&\quad-S_{\{a_j\}_{j\in{\Z}},(N_2+1,M)}^{\{b_j\}_{j\in{\Z}}}(\{W_t^{(\alpha,\beta)}\}_{t>0})(f\chi_{\frac{n}{2}\le m\le \frac{3n}{2}})(n), \quad n\in{\N_0}.
	\end{align*}
	We are going to see that  there exists $C>0$ such that 
	\begin{align*}
	|&S_{\{a_j\}_{j\in{\Z}},(l,M)}^{\{b_j\}_{j\in{\Z}}}(\{W_t^{(\alpha,\beta)}\}_{t>0})(f\chi_{\frac{n}{2}\le m\le \frac{3n}{2}})(n)|\\
	&\le  C\left( 	\mathcal{M}(	S_{\{a_j\}_{j\in{\Z}},(-M,M)}^{\{b_j\}_{j\in{\Z}}}(\{W_t^{(\alpha,\beta)}\}_{t>0})(f))(n)+	\mathcal{M}_q(f)(n)\right),
	\end{align*}
	for every $l\in\Z$, $-M<l<M$ and $n\in\N$. Here, $C$ does not depend on $n\in{\N_0}$, $M\in\N$ and $l\in\Z$, $-M<l<M$.
	
	Assume that $n\in{\N_0}$ and $l\in\Z$, $-M<l<M$. We decompose $f$ as follows
	$$
	f=f\chi_{B_{\N_0} (n,\sqrt{a_l})}+f\chi_{B_{\N_0} (n,\sqrt{a_l})^c}=:f_1+f_2.
	$$
	We have that
	\begin{align*}
	|&S_{\{a_j\}_{j\in\Z},(l,M)}^{\{b_j\}_{j\in\Z}}(\{W_t^{(\alpha,\beta)}\}_{t>0})(f\chi_{\frac{n}{2}\le m\le \frac{3n}{2}})(n)|\\
	&\le	|S_{\{a_j\}_{j\in\Z},(l,M)}^{\{b_j\}_{j\in\Z}}(\{W_t^{(\alpha,\beta)}\}_{t>0})(f_1\chi_{\frac{n}{2}\le m\le \frac{3n}{2}})(n)|\\&\quad+	|S_{\{a_j\}_{j\in\Z},(l,M)}^{\{b_j\}_{j\in\Z}}(\{W_t^{(\alpha,\beta)}\}_{t>0})(f_2\chi_{\frac{n}{2}\le m\le \frac{3n}{2}})(n)|=:A(l,M,n)+B(l,M,n).
		\end{align*}
		According to Proposition \ref{prop3.2} (i), we obtain
		$$
		A(l,M,n)\le \frac{C}{\sqrt{a_l}}\sum_{k\in B_l}|f_1(k)|\le C\mathcal{M}(f)(n).
		$$
		On the other hand, we can write
			\begin{align*}
			B(l,M,n)\le \frac{C}{\sqrt{a_{l-1}}}\sum_{|k-n|\le \frac{1}{2}\sqrt{a_{{l-1}}}}\Bigg(&|	S_{\{a_j\}_{j\in\Z},(-M,M)}^{\{b_j\}_{j\in\Z}}(\{W_t^{(\alpha,\beta)}\}_{t>0})(f\chi_{{\frac{k}{2}\le m\le \frac{3k}{2}}})(k)|\\
			&+|	S_{\{a_j\}_{j\in\Z},(-M,M)}^{\{b_j\}_{j\in\Z}}(\{W_t^{(\alpha,\beta)}\}_{t>0})(f_1\chi_{{\frac{k}{2}\le m\le \frac{3k}{2}}})(k)|\\
			&+|	S_{\{a_j\}_{j\in\Z},(l,M)}^{\{b_j\}_{j\in\Z}}(\{W_t^{(\alpha,\beta)}\}_{t>0})(f_2\chi_{{\frac{k}{2}\le m\le \frac{3k}{2}}})(k)\\&\qquad-S_{\{a_j\}_{j\in\Z},(l,M)}^{\{b_j\}_{j\in\Z}}(\{W_t^{(\alpha,\beta)}\}_{t>0})(f_2\chi_{{\frac{n}{2}\le m\le \frac{3n}{2}}})(n)|\\
			&+|	S_{\{a_j\}_{j\in\Z},(-M,l-1)}^{\{b_j\}_{j\in\Z}}(\{W_t^{(\alpha,\beta)}\}_{t>0})(f_2\chi_{{\frac{k}{2}\le m\le \frac{3k}{2}}})(k)|
			\Bigg)\\
			&=:\sum_{i=1}^4 B_i(l,M,n),
				\end{align*}
				with the obvious understanding for the four sums when $l=-M$.
				
				We now estimate $B_i(l,M,n)$, $i=1,2,3,4$.
				\begin{itemize}
					\item[(i)] It is clear that
					$$
					B_1(l,M,n)\le 	C\mathcal{M}(	S_{\{a_j\}_{j\in\Z},(-M,M),loc}^{\{b_j\}_{j\in\Z}}(\{W_t^{(\alpha,\beta)}\}_{t>0})(f))(n).
					$$
				\item[(ii)]	Since the family $\left\{S_{\{a_j\}_{j\in\Z},N}^{\{b_j\}_{j\in\Z}}(\{W_t^{(\alpha,\beta)}\}_{t>0})\right\}_{\substack{N=(N_1,N_2)\in\Z^2\\N_1<N_2}}$ of operators is uniformly bounded from $L^q({\N_0})$ into itself, $\left\{S_{\{a_j\}_{j\in\Z},N,loc}^{\{b_j\}_{j\in\Z}}(\{W_t^{(\alpha,\beta)}\}_{t>0})\right\}_{\substack{N=(N_1,N_2)\in\Z^2\\N_1<N_2}}$ is also uniformly bounded from $L^q({\N_0})$ into itself. Then, by using H\"older inequality and by taking into account that is a $(\lambda, \lambda^2)$-lacunary sequence, we obtain that
				$$
					B_2(l,M,n)\le C	\mathcal{M}_q(f)(n).
				$$
				\item[(iii)] By using {\eqref{eq3.A} and \eqref{eq3.B}}, we can prove, by proceeding as in the proof of \cite[{(18)}]{BCFR} that
				$$
				B_3(l,M,n)\le C	\mathcal{M}(f)(n).
				$$
					\item[(iv)] By Proposition \ref{prop3.2} (ii), we deduce that
					$$
							B_4(l,M,n)\le C	\mathcal{M}(f)(n).
					$$
					\end{itemize}
					By combining (i)-(iv), it follows that
				\begin{align*}
				B(l,M,n)
			\le  C\left( 	\mathcal{M}(	S_{\{a_j\}_{j\in\Z},(-M,M),loc}^{\{b_j\}_{j\in\Z}}(\{W_t^{(\alpha,\beta)}\}_{t>0})(f))(n)+	\mathcal{M}_q(f)(n)\right).
			\end{align*}
			Thus, we conclude that
					\begin{align*}
				|S_{\{a_j\}_{j\in{\Z}},(l,M),loc}^{\{b_j\}_{j\in{\Z}}}&(\{W_t^{(\alpha,\beta)}\}_{t>0})(f)(n)|\\&\le  C\left( 	\mathcal{M}(	S_{\{a_j\}_{j\in\Z},(-M,M),loc}^{\{b_j\}_{j\in\Z}}(\{W_t^{(\alpha,\beta)}\}_{t>0})(f))(n)+	\mathcal{M}_q(f)(n)\right).
				\end{align*}
\end{proof}

\subsection{Proof of Theorem \ref{teo1.2}}

Let $M\in\N$. For every $n\in{\N_0}$, we can write
\begin{align*}
S_{\{a_j\}_{j\in{\Z}},*,M}^{\{b_j\}_{j\in{\Z}}}(\{W_t^{(\alpha,\beta)}\}_{t>0})(f)(n)&\le  S_{\{a_j\}_{j\in{\Z}},*,M,loc}^{\{b_j\}_{j\in{\Z}}}(\{W_t^{(\alpha,\beta)}\}_{t>0})(f)(n)\\&\quad+S_{\{a_j\}_{j\in{\Z}},*,M,glob}^{\{b_j\}_{j\in{\Z}}}(\{W_t^{(\alpha,\beta)}\}_{t>0})(f)(n),
\end{align*}
where 
$$
S_{\{a_j\}_{j\in{\Z}},*,M,loc}^{\{b_j\}_{j\in{\Z}}}(\{W_t^{(\alpha,\beta)}\}_{t>0})(f)(n)=\sup_{\substack{N=(N_1,N_2)\\
		N_1,N_2\in\Z,\\ -M\le N_1<N_2\le M}}|S_{\{a_j\}_{j\in{\Z}},N}^{\{b_j\}_{j\in{\Z}}}(\{W_t^{(\alpha,\beta)}\}_{t>0})(f\chi_{[\frac{{n}}{2}, \frac{{3n}}{2}]})(n)|,
$$
and
$$
S_{\{a_j\}_{j\in{\Z}},*,M,glob}^{\{b_j\}_{j\in{\Z}}}(\{W_t^{(\alpha,\beta)}\}_{t>0})(f)(n)=\sup_{\substack{N=(N_1,N_2)\\
		N_1,N_2\in\Z,\\ -M\le N_1<N_2\le M}}|S_{\{a_j\}_{j\in{\Z},N}}^{\{b_j\}_{j\in{\Z}}}(\{W_t^{(\alpha,\beta)}\}_{t>0})(f(1-\chi_{[\frac{{n}}{2}, \frac{{3n}}{2}]}))(n)|.
$$
According to {\eqref{eq3.A}}, there exists $C>0$ such that
\begin{align*}
|S_{\{a_j\}_{j\in{\Z}},*,M,glob}^{\{b_j\}_{j\in{\Z}}}(\{W_t^{(\alpha,\beta)}\}_{t>0})(f)(n)|&\le C\sum_{m\not\in[n/2,3n/2]}\frac{|f(m)|}{|n-m|}\\
&\le C\left( \frac{1}{n}\sum_{m=0}^{n-1}|f(m)|+\sum_{m=n+1}^\infty\frac{|f(m)|}{m}\right), \quad n\in{\N_0}.
\end{align*}
Here, when $n=0$, the first term in the last sum does not appear. Here, $C$ does not depend on $M$. By using $\ell^p$-boundedness properties of discrete Hardy operators, we deduce that the operator $S_{\{a_j\}_{j\in{\Z}},*,M,glob}^{\{b_j\}_{j\in{\Z}}}(\{W_t^{(\alpha,\beta)}\}_{t>0})$ is bounded from  ${\ell^p(\N_0)}$ into itself, for every $1<p<\infty$. Furthermore, we have that
$$
\sup_{M\in\N}\|S_{\{a_j\}_{j\in{\Z}},*,M,glob}^{\{b_j\}_{j\in{\Z}}}(\{W_t^{(\alpha,\beta)}\}_{t>0}) \|_{{\ell^p(\N_0)}\to {\ell^p(\N_0)}}<\infty, 
$$
for every $1<p<\infty$.

Let $1<p<\infty$. We choose $1<q<p$. $\mathcal{M}_q$ defines a bounded operator from ${\ell^p(\N_0)}$ into itself.

According to {Theorem 3.1}, the operator $S_{\{a_j\}_{j\in{\Z}},(-M,M)}^{\{b_j\}_{j\in{\Z}}}(\{W_t^{(\alpha,\beta)}\}_{t>0})$ is bounded from  ${\ell^p(\N_0)}$ into itself. Moreover, we have that
$$
\sup_{M\in\N}\| S_{\{a_j\}_{j\in{\Z}},(-M,M)}^{\{b_j\}_{j\in{\Z}}}(\{W_t^{(\alpha,\beta)}\}_{t>0})\|_{{\ell^p(\N_0)}\to {\ell^p(\N_0)}}<\infty.
$$
As above, by using {\eqref{eq3.A}} and the $\ell^p$-boundedness properties of discrete Hardy operators, we can deduce that the operator $S_{\{a_j\}_{j\in\Z},(-M,M),glob}^{\{b_j\}_{j\in\Z}}(\{W_t^{(\alpha,\beta)}\}_{t>0})$ is bounded from  ${\ell^p(\N_0)}$ into itself and
$$
\sup_{M\in\N}\| S_{\{a_j\}_{j\in\Z},(-M,M),glob}^{\{b_j\}_{j\in\Z}}(\{W_t^{(\alpha,\beta)}\}_{t>0})\|_{{\ell^p(\N_0)}\to {\ell^p(\N_0)}}<\infty.
$$
Then, $S_{\{a_j\}_{j\in\Z},(-M,M),loc}^{\{b_j\}_{j\in\Z}}(\{W_t^{(\alpha,\beta)}\}_{t>0})$ is bounded from  ${\ell^p(\N_0)}$ into itself and
$$
\sup_{M\in\N}\| S_{\{a_j\}_{j\in\Z},(-M,M),loc}^{\{b_j\}_{j\in\Z}}(\{W_t^{(\alpha,\beta)}\}_{t>0})\|_{{\ell^p(\N_0)}\to {\ell^p(\N_0)}}<\infty.
$$
According to Proposition \ref{prop3.3}, $S_{\{a_j\}_{j\in\Z},M,*,loc}^{\{b_j\}_{j\in\Z}}(\{W_t^{(\alpha,\beta)}\}_{t>0})$ is bounded from  ${\ell^p(\N_0)}$ into itself and
$$
\sup_{M\in\N}\| S_{\{a_j\}_{j\in\Z},M,*,loc}^{\{b_j\}_{j\in\Z}}(\{W_t^{(\alpha,\beta)}\}_{t>0})\|_{{\ell^p(\N_0)}\to {\ell^p(\N_0)}}<\infty.
$$
We conclude that $S_{\{a_j\}_{j\in\Z},M,*}^{\{b_j\}_{j\in\Z}}(\{W_t^{(\alpha,\beta)}\}_{t>0})$ is bounded from  ${\ell^p(\N_0)}$ into itself and
$$
\sup_{M\in\N}\| S_{\{a_j\}_{j\in\Z},M,*}^{\{b_j\}_{j\in\Z}}(\{W_t^{(\alpha,\beta)}\}_{t>0})\|_{{\ell^p(\N_0)}\to {\ell^p(\N_0)}}<\infty.
$$
By taking $M\to+\infty$, it follows that the operator $S_{\{a_j\}_{j\in\Z},M,*}^{\{b_j\}_{j\in\Z}}(\{W_t^{(\alpha,\beta)}\}_{t>0})$ is bounded from  ${\ell^p(\N_0)}$ into itself.

We now apply vector-valued Calder\'on-Zygmund  theory for singular integrals (see \cite{RRT1} and \cite{RRT2}).

We can write
$$
S_{\{a_j\}_{j\in\Z},*}^{\{b_j\}_{j\in\Z}}(\{W_t^{(\alpha,\beta)}\}_{t>0})(f)=\left\| S_{\{a_j\}_{j\in\Z},N}^{\{b_j\}_{j\in\Z}}(\{W_t^{(\alpha,\beta)}\}_{t>0})(f)\right\|_{\ell^\infty(\Z \times\Z)}.
$$
For every $N=(N_1,N_2)$, where $N_1,N_2\in\Z$ and $N_1<N_2$ and $f\in \ell^\infty_c(\Z \times\Z)$, we have that
$$
S_{\{a_j\}_{j\in\Z},N}^{\{b_j\}_{j\in\Z}}(\{W_t^{(\alpha,\beta)}\}_{t>0})(f)(n)=\sum_{m\in{\N_0}}\mathcal{Q}_N^{(\alpha,\beta)}(n,m)f(m),\quad n\in{\N_0},
$$
where
$$
\mathcal{Q}_N^{(\alpha,\beta)}(n,m)=\sum_{j=N_1}^{N_2}b_j\left(K_{a_{j+1}}^{(\alpha,\beta)}(n,m)-K_{a_{j}}^{(\alpha,\beta)}(n,m)\right), \quad n,m\in{\N_0}.
$$

According to { \eqref{eq3.A}} and {\eqref{eq3.B}}, by using \cite[Theorem 2.1]{BCFR} we can prove that the operator $S_{\{a_j\}_{j\in\Z},*}^{\{b_j\}_{j\in\Z}}(\{W_t^{(\alpha,\beta)}\}_{t>0})$ is bounded from $\ell^p({\N_0},w)$ into itself, for every $1<p<\infty$ and $w\in A_p({\N_0})$, and from $\ell^1({\N_0},w)$ into $\ell^{1,\infty}({\N_0},w)$, for every $w\in A_1({\N_0})$.
\edproof

\vspace{0.4cm}

{\bf Declarations}

\thanks{
	The authors are partially supported by grant  PID2019-106093GB-I00 from the Spanish Government. The second author is also supported by the Spanish MINECO
	through Juan de la Cierva fellowship FJC2020-044159-I.} 
    
%
%
%
\bibliographystyle{siam}
\bibliography{references}

\end{document}